\newtheorem{theorem}{Theorem}
\numberwithin{theorem}{subsection}
\newtheorem{lemma}[theorem]{Lemma}
\newtheorem{proposition}[theorem]{Proposition}
\newtheorem{conjecture}[theorem]{Conjecture}
\newtheorem{corollary}[theorem]{Corollary}
\newtheorem{question}[theorem]{Question}
\theoremstyle{definition}
\newtheorem{definition}[theorem]{Definition}
\newtheorem{example}[theorem]{Example}
\newtheorem{notation}[theorem]{Notation}
\newtheorem{remark}[theorem]{Remark}
\DeclareMathOperator{\End}{End}
\DeclareMathOperator{\id}{id}
\DeclareMathOperator{\ev}{ev}
\DeclareMathOperator{\coev}{coev}
\DeclareMathOperator{\floor}{floor}
\DeclareMathOperator{\roof}{roof}
\DeclareMathOperator{\length}{length}
\DeclareMathOperator{\col}{col}
\DeclareMathOperator{\height}{height}
\DeclareMathOperator{\rowlength}{rowlength}
\DeclareMathOperator{\Rad}{Rad}
\DeclareMathOperator{\PGL}{PGL}
\DeclareMathOperator{\PSL}{PSL}
\DeclareMathOperator{\SL}{SL}
\newcommand{\mc}{\mathcal}
\newcommand{\kk}{\Bbbk}
\newcommand{\rep}{{\sf{rep}}}
\newcommand{\Ver}{{\sf{Ver}}}
\title[Non-negligible summands in tensor powers of representations of $p$-groups]{Non-negligible summands in tensor powers of some modular representations of finite $p$-groups}
\author{Kent B. Vashaw}
\address{
Department of Mathematics\\
UCLA\\
Los Angeles, CA 90095\\
U.S.A.}
\email{kentvashaw@math.ucla.edu}
\author{Justin Zhang}
\address{Department of Mathematics\\
MIT\\
Cambridge, MA 02139\\
U.S.A.}
\email{juszha@mit.edu}
\date{}
\dedicatory{Dedicated to Dan Nakano on the occasion of his 60th birthday, with admiration}
\begin{document}


\subjclass{
20C20, 
18M05
}

\begin{abstract}
Let $p>0$ be a prime, $G$ be a finite $p$-group and $\Bbbk$ be an algebraically closed field of characteristic $p$. Dave Benson has conjectured that if $p=2$ and $V$ is an odd-dimensional indecomposable representation of $G$ then all summands of the tensor product $V \otimes V^*$ except for $\Bbbk$ have even dimension. It is known that the analogous result for general $p$ is false. In this paper, we investigate the class of graded representations $V$ which have dimension coprime to $p$ and for which $V \otimes V^*$ has a non-trivial summand of dimension coprime to $p$, for a graded group scheme closely related to $\mathbb{Z}/p^r \mathbb{Z} \times \mathbb{Z}/p^s \mathbb{Z}$, where $r$ and $s$ are nonnegative integers and $p>2$. We produce an infinite family of such representations in characteristic 3 and show in particular that the tensor subcategory generated by any of these representations in the semisimplification contains the modulo $3$ reduction of the category of representations of the symmetric group $S_3$. Our results are compatible with a general version of Benson's conjecture due to Etingof.

\end{abstract}

\maketitle

\section{Introduction}

In modular representation theory of finite groups, that is, the study of representations of a finite group $G$ whose order is divisible by a prime $p$ over a field of characteristic $p$, many seemingly easy-to-state questions about the decompositions of tensor products remain unsolved. One such fundamental question that has recently generated research interest is the following.

\begin{question}
\label{mainq}
For what finite-dimensional indecomposable $G$-representations $V$ does the tensor product $V\otimes V^*$ break down into a direct sum of the trivial representation $\kk$ and indecomposable representations of dimension divisible by $p$?
\end{question}

By a theorem of Benson--Carlson \cite[Theorem 2.1]{Benson-Carlson}, the trivial representation $\kk$ appears as a direct summand of $V \otimes W$, with $V$ and $W$ indecomposable, if and only if $W \cong V^*$ and the dimension of $V$ is not divisible by $p$. In this case the multiplicity of $\kk$ in the decomposition of $V \otimes V^*$ is 1. We adopt the terminology of \cite{BENSON202024}: when $V$ has dimension non-divisible by $p$, we say that $V$ is a {\emph{$p'$-representation}} or {\emph{non-negligible}}, and when $V \otimes V^*$ breaks down into a direct sum of $\kk$ and representations of dimension divisible by $p$ we call $V$ {\em{$p'$-invertible.}} Using this terminology, Question \ref{mainq} can now be restated as: which $p'$-representations are $p'$-invertible?

This question can be restated in yet another way in the language of semisimplifications of tensor categories. The semisimplification of a spherical tensor category (see \cite{Etingof-Ostrik}) is a semisimple tensor category, where simple objects correspond to those indecomposable objects of the original category whose dimension (considered as an element of $\kk$) is nonzero (see Theorem \ref{semis-recall}). We write $\rep(\kk,G)$, or just $\rep(G)$ when the field $\kk$ is understood, for the category of finite-dimensional representations of $G$ over $\kk$. The category $\rep(G)$ is a symmetric tensor category, and has a semisimplification $\overline{\rep}(G)$. Therefore, Question \ref{mainq} can be restated as: what are the tensor-invertible representations of the semisimplification of $G$-representations $\overline{\rep}(G)$? Since semisimplifications of categories of representations for finite groups have recently been an important tool for studying general symmetric tensor categories \cite{Coulembier-Etingof-Ostrik, Benson-Etingof-Ostrik}, this question is pressing. 

Based on extensive evidence collected using computer algebra systems, Dave Benson has made a striking conjecture for the answer to Question \ref{mainq} in characteristic 2, see \cite[Conjecture 1.1]{BENSON202024}:

\begin{conjecture}[Benson's Conjecture]
\label{Bensonconj}
If $G$ is a finite 2-group, $\kk$ a field of characteristic $2$, and $V$ an odd-dimensional representation of $G$, then all indecomposable summands of $V \otimes V^*$ have dimension divisible by 2 except for the single summand isomorphic to $\kk$. That is, every $2'$-representation is $2'$-invertible. 
\end{conjecture}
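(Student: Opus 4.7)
The plan is to reformulate Benson's conjecture as the statement that every non-negligible indecomposable $V$ becomes invertible in the semisimplification $\overline{\rep}(G)$, i.e.\ that the image $\bar V$ satisfies $\bar V \otimes \bar V^* \cong \bar \kk$. Combined with Benson--Carlson, which already provides $\kk$ as a summand of $V \otimes V^*$ with multiplicity one, the conjecture becomes the assertion that no other indecomposable summand of $V \otimes V^*$ is non-negligible. I would then attack this by induction on $|G|$.

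For the base case, finite cyclic $2$-groups $C_{2^n}$ can be verified using the classical decomposition of $J_m \otimes J_n$ into Jordan blocks in characteristic $2$: for odd $m \leq 2^n$ one checks directly that $J_m \otimes J_m$ decomposes as $\kk$ together with Jordan blocks of even size. For the inductive step on a non-cyclic $2$-group $G$, one natural approach is via Green correspondence: if $V$ has vertex a proper subgroup $Q < G$, then $V$ is a summand of an induced module from $Q$, and one attempts to transfer the conclusion of the inductive hypothesis from $Q$-modules to $G$-modules. Alternatively, one can fix a proper normal subgroup $N \lhd G$ and study the structure of $V\!\downarrow_N$ together with the Clifford-theoretic description of indecomposable summands of $V \otimes V^*$.

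The principal obstacle is that neither Green correspondence nor restriction/induction preserves dimension modulo $p$, so the critical parity information is lost at each reduction step. A deeper difficulty is the case of elementary abelian $2$-groups of rank at least two together with $V$ of full vertex, for which the module category is wild and no explicit indecomposable classification exists. Here I would try to combine detection on shifted cyclic subgroups (via Carlson's rank variety machinery, exploiting that $V \otimes V^*$ has the same support variety as $V$) with the rigid symmetric tensor category structure of $\overline{\rep}(G)$, in particular the observation that $\Ver_2 \simeq \mathrm{Vec}$, which heuristically suggests that the semisimplification should carry very few non-trivial invertible objects. The crux of any complete proof will be producing a uniform cohomological or geometric obstruction that pins down the parity of indecomposable summand dimensions; this is the step that I expect to require genuinely new ideas beyond the Benson--Carlson framework, and it is presumably the reason the conjecture remains open.
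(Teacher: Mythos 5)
This statement is \Cref{Bensonconj}, which the paper explicitly presents as an \emph{open conjecture} due to Dave Benson, supported by computational evidence; the paper offers no proof of it and does not claim to. Your ``proposal'' is, by your own admission, not a proof: you correctly reformulate the conjecture in the language of the semisimplification and observe that Benson--Carlson already supplies $\kk$ with multiplicity one, but your inductive strategy stalls precisely where you say it does. The cyclic base case is indeed tractable (the Clebsch--Gordan-type decomposition of Jordan blocks in characteristic $2$ is classical), but the reductions you suggest for the inductive step --- Green correspondence, Clifford theory for a normal $N \lhd G$, restriction to shifted cyclic subgroups --- all lose the dimension-mod-$2$ information that the conjecture is about, as you note. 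Rank variety and support variety techniques detect projectivity but not parity of non-projective summand dimensions, and the observation that $\Ver_2 \simeq \mathrm{Vec}$ constrains invertible objects in $\overline{\rep}(G)$ but does not by itself rule out higher-dimensional non-negligible summands of $V \otimes V^*$.

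In short, you have written a reasonable survey of why the problem is hard, and you honestly conclude that a genuinely new idea is needed. That is a correct assessment but not a proof, and no such proof is expected here: the paper records the statement as a conjecture and uses it only as motivation for its actual results (\Cref{maintheorem}, \Cref{mainthm-p3}) concerning the failure of the naive analogue for $p>2$.
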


In fact, this is a weak version of Benson's Conjecture; he also gives a strong version of the conjecture, which is that all summands of $V \otimes V^*$ have dimension divisible by 4, based on ample computational evidence.

One useful consequence of \Cref{Bensonconj} would be that all tensor powers of a $2'$-representation $V$ would have a unique $2'$-summand. In this case, Benson has further conjectures on the growth of the dimension of these $2'$-summands \cite[Conjecture 1.0.2]{cao2023decomposition}, which has been verified in a few small examples \cite[Theorem 1.0.3]{cao2023decomposition}. Alternative closely-related growth functions related to either the number of indecomposable summands in a tensor power or the dimensions of non-projective summands in tensor powers have also received significant recent attention, see e.g.~ \cite{Benson-Symonds,Upadhyay2021, Upadhyay2022,Chirvasitu-Hudson-Upadhyay,Lacabanne-Tubbenhauer-Vaz-2023,Coulembier-Ostrik-Tubbenhauer,Coulembier-Etingof-Ostrik-pre, CEOT}.

For $p>2$, however, there is not even a conjectural description of the $p'$-invertible representations. The naive extension of Benson's Conjecture -- that all $p'$-representations are $p'$-invertible -- fails in even very low-dimensional examples starting with $p=3$. Etingof has proposed the following generalization of Benson's conjecture for all $p$ based on the semisimplification $\overline{\rep}(G)$.

\begin{conjecture}[Generalized Benson Conjecture]
    \label{etingof-conj}
    Let $G$ be a $p$-group for $p$ a prime integer, $\kk$ a field of characteristic $p$, and $\mc{C}$ a tensor subcategory of $\overline{\rep}(G)$ which is equivalent to $\rep(G')$ for some finite group $G'$ whose order is coprime to $p$. Then every prime dividing the order of $G'$ is less than $p$. 
\end{conjecture}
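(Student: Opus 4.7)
The approach I would take leans on the emerging classification theory of symmetric tensor categories in positive characteristic, and in particular the fibration theorem of Coulembier--Etingof--Ostrik. The plan is to translate the hypothesised inclusion $\mc{C} \simeq \rep(G') \hookrightarrow \overline{\rep}(G)$ into a symmetric tensor functor from $\rep(G')$ to the generalised Verlinde category $\Ver_{p^\infty}$, and then use the very limited supply of Tannakian fusion subcategories of $\Ver_{p^\infty}$ to pin down the allowable prime divisors of $|G'|$.

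First I would verify that the tensor closure inside $\overline{\rep}(G)$ of $\mc{C}$ satisfies the moderate-growth hypothesis required for the CEO fibration theorem to apply. Since $\mc{C}$ is already a fusion category of global dimension $|G'|$ coprime to $p$, it has polynomial Frobenius--Perron growth, and one expects its tensor closure in $\overline{\rep}(G)$ to inherit a suitable bound from the known growth behaviour of $p'$-indecomposables of $\rep(G)$. Because $|G'|$ is nonzero in $\kk$, the resulting symmetric tensor functor $F \colon \mc{C} \to \Ver_{p^\infty}$ cannot annihilate any simple object, so $\rep(G')$ must embed fully faithfully as a Tannakian fusion subcategory of $\Ver_{p^n}$ for some finite $n$.

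The heart of the argument is then to exploit the fact that Tannakian fusion subcategories of $\Ver_{p^\infty}$ are extremely constrained: the Picard group of $\Ver_p$ has order at most $2$ (so abelian quotients of $G'$ can only absorb primes at most $2$), and the categorical dimensions of simples of $\Ver_p$ lie in $\{1,2,\ldots,p-1\}$ with each dimension realised only once. Combining these constraints with Ito--Michler-type character-degree theorems, one aims to rule out any prime $q \ge p$ dividing $|G'|$: given a Sylow $q$-subgroup $Q \subseteq G'$, the induced/inflated representations of $Q$ should produce either a $q$-dimensional irreducible object or $q$ distinct invertible objects inside $\Ver_{p^n}$, contradicting the structural bounds above.

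The hard part will be the first step: the moderate-growth hypothesis is notoriously delicate for infinite symmetric tensor categories like $\overline{\rep}(G)$, and one may need to replace the ambient category by a carefully chosen finite symmetric fusion subcategory engineered from the data of $\mc{C}$ together with finitely many auxiliary $p'$-indecomposables. A complementary obstacle is that the classification of Tannakian fusion subcategories of $\Ver_{p^\infty}$ is itself subtle for large $p$: for simple non-abelian $G'$ with $G'^{\mathrm{ab}} = 1$ the Picard-group argument is insufficient, and one must really use the per-dimension uniqueness in $\Ver_p$ in a nontrivial combinatorial way. These two difficulties go a long way toward explaining why the conjecture remains open and why the paper establishes compatibility of the conjecture with specific infinite families rather than the full statement.
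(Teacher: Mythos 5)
The statement you are attempting to prove is not a theorem in the paper: it is \Cref{etingof-conj}, an open conjecture attributed to Etingof, and the paper offers no proof of it. The paper's contribution is to produce infinite families of examples \emph{consistent} with the conjecture (notably \Cref{mainthm-p3}), not to establish the conjecture itself. So there is no ``paper's proof'' against which to compare your argument, and a review can only assess whether your sketch would, in principle, close the gap.

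It does not, and the central problem is the step where you pass from the existence of a fiber functor to an \emph{embedding}. The Coulembier--Etingof--Ostrik theorem (which the paper invokes in the remark following \Cref{induced-symm}) produces a symmetric tensor functor $F\colon \mc{C}\to \Ver_p$ (you can stay in $\Ver_p$ rather than $\Ver_{p^\infty}$ since $\mc{C}$ is fusion, hence Frobenius exact, and moderate growth is automatic for a fusion category, so your first paragraph's worry is a non-issue). But a fiber functor is emphatically \emph{not} fully faithful: the forgetful functor $\rep(G')\to\mathrm{Vec}$ is the basic counterexample. What Tannakian reconstruction gives you is $\mc{C}\simeq \rep_{\Ver_p}(H)$ for an affine group scheme $H$ internal to $\Ver_p$; it does not realize $\mc{C}$ as a subcategory of $\Ver_p$. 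Consequently, the subsequent constraints you want to use---the small Picard group of $\Ver_p$, the per-dimension uniqueness of simples in $\Ver_p$---simply do not transfer to simple objects of $\mc{C}$, because those objects land in $\Ver_p$ only after applying $F$, which collapses information. All of the arithmetic leverage in your third paragraph (Ito--Michler, Sylow $q$-subgroups forcing $q$-dimensional simples or $q$ invertibles inside $\Ver_{p^n}$) is predicated on this false embedding and so the contradiction you aim for never materializes.

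What the approach would actually need is a classification of finite linearly reductive group schemes $H$ in $\Ver_p$ together with a determination of which ones have $\rep_{\Ver_p}(H)\simeq\rep(G')$ for an ordinary finite group $G'$, and then a structural constraint ruling out prime divisors $\ge p$ of $|G'|$. This is genuinely hard and is precisely why the conjecture is open; you correctly sense this in your last paragraph, but the specific technical route you propose does not localize the difficulty in the right place. The honest conclusion is that your sketch reproduces the known reduction to group schemes in $\Ver_p$ (as the paper's remark already records) but does not advance beyond it, and the purported structural constraints on $\Ver_p$ cannot be applied to $\mc{C}$ in the way you describe.
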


We clarify precisely what we mean by ``equivalent" when we restate this conjecture in the language of induced $p'$-symmetries below in Conjecture \ref{etingof-conj-2}.

More generally, Conjecture \ref{etingof-conj} suggests the following question.

\begin{question}
    \label{which-subcats}
    For $G$ and $p$ as above, what tensor categories appear as tensor subcategories of $\overline{\rep}(G)$?
\end{question}

In this paper, we study \Cref{mainq} and \Cref{which-subcats} for certain representations of a finite group scheme $\alpha_p(r,s)$ which is closely related to $\mathbb{Z}/p^r \mathbb{Z} \times \mathbb{Z} / p^s \mathbb{Z}$. This group scheme is $\mathbb{Z}^2$-graded. We will consider cyclic representations which are generated in degree $(0,0)$. 

In our first result, we prove a general theorem. Here we will use the correspondence between cyclic representations $V$ and diagrams with rows and columns as in Section \ref{background}. 

\begin{theorem}
    \label{maintheorem}
    Let $p$ be a prime integer, $h$ be a positive integer, and $r$ and $s$ be nonnegative integers. Let $V_h$ be the cyclic representation of dimension $h$ generated in degree $(0,0)$ for $\alpha_p(r,s)$ such that $x$ acts by $0$. Suppose $W$ is a direct summand of $V_h \otimes V_h^*$ whose diagram is a column, generated in degree $(0,-n)$ and of dimension $2n+1$ or $2n+2$. Set $m_1:=\lceil \log_p(h) \rceil$ and $m_2:=\lceil \log_p(\dim(W)) \rceil$, that is, $m_1$ is the smallest integer such that $h \leq p^{m_1}$ and $m_2$ is the smallest integer such that $\dim(W) \leq p^{m_2}$. Set $g:=\max(m_1, m_2)$. Suppose $V$ is a cyclic representation such that every column has length equal to 0 or $h$ modulo $p^g$ and $\dim(V)$ is coprime to $p$. Then $W$ is also a direct summand of $V \otimes V^*$. 
\end{theorem}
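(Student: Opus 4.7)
The plan is to reduce the general statement to a statement about a single ``good'' column of $V$ via a stability argument modulo $p^g$. First, observe that since every column of $V$ has length congruent to $0$ or $h$ modulo $p^g$ and $\dim V$ is coprime to $p$, not every column of $V$ can have length divisible by $p^g$; otherwise $p$ would divide $\dim V$. Hence $V$ has at least one column of length $a$ with $a \equiv h \pmod{p^g}$.

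The main step is then to prove a stability lemma: for any such $a$, the tensor square $V_a \otimes V_a^*$ contains $W$ as a direct summand. Since $h \leq p^g$ and $\dim W \leq p^g$ by the definition of $g = \max(m_1, m_2)$, this concerns only ``low-dimensional'' summands of the tensor square of a column representation. I would argue by induction on $a$, showing that the operation $a \mapsto a + p^g$ leaves invariant the multiplicities of summands of dimension at most $p^g$. The key ingredient is that $x$ acts by $0$ on $V_h$ and hence on each $V_a$ under consideration, so the module structure reduces to a $y$-direction Jordan analysis on the diagram; adding a length-$p^g$ block to the column should contribute only summands of dimension at least $p^g$ to any tensor product in which it appears, and hence cannot perturb the multiplicity of $W$.

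The final step uses the diagrammatic description of tensor products of cyclic $\alpha_p(r,s)$-modules from Section \ref{background} to transfer the column-level conclusion to $V \otimes V^*$: the identified column of $V$, together with its dual counterpart, contributes $W$ to the decomposition of $V \otimes V^*$, and by Krull--Schmidt uniqueness of multiplicities this guarantees that $W$ is a summand of $V \otimes V^*$ itself, regardless of any additional contributions coming from the other columns of $V$. The main obstacle is the stability lemma, where one must verify rigorously that low-dimensional summands of the tensor square of a cyclic column representation are insensitive to adding full $p^g$-length blocks, so that the multiplicity of $W$ in $V_a \otimes V_a^*$ depends only on $a$ modulo $p^g$. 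This combinatorial analysis, relying on the explicit $\alpha_p(r,s)$-action and crucially on the hypothesis that $x$ acts by zero, is the technical heart of the proof.
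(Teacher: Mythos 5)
There is a genuine gap in the final ``transfer'' step, and it is fatal to the argument as proposed. You claim that once one identifies a column of $V$ of length $a \equiv h \pmod{p^g}$ such that $W$ is a summand of $V_a \otimes V_a^*$, then ``the identified column of $V$, together with its dual counterpart, contributes $W$ to the decomposition of $V\otimes V^*$'' by Krull--Schmidt. But $V$ is a cyclic module generated in degree $(0,0)$ whose columns are linked by the $x$-action: by Corollary \ref{indecomp-gr}, $V$ is indecomposable, so it is not a direct sum of its columns, and a single column is not a subquotient that contributes independently to $V \otimes V^*$. There is no Krull--Schmidt argument available here, because there is no decomposition into column-modules to apply it to.

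More tellingly, even if one reduces the problem to a sum of contributions over columns (which the paper does, but at the level of the scalar invariant of Proposition \ref{summand-formula} rather than at the level of modules), those contributions can and do cancel. The paper shows that columns with $\operatorname{height}(i)\equiv 0$, $\operatorname{height}(i-1)\equiv h$ contribute $+c\cdot\operatorname{rowlength}(i)$ while columns with the heights reversed contribute $-c\cdot\operatorname{rowlength}(i)$ (Lemmas \ref{j1-j2-not-same} and \ref{j1-j2-not-same-two}); the net sum is then shown (Lemma \ref{length-vs-rowlength}) to be $(c/h)\cdot\dim V$, which is nonzero precisely because $\dim V$ is coprime to $p$. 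Thus the nonvanishing is a property of the \emph{entire} diagram of $V$, governed by the global hypothesis $p\nmid\dim V$, and cannot be read off from a single column. Your proposal never uses the hypothesis $p\nmid\dim V$ beyond guaranteeing the existence of one good column, which is not enough. The ``stability lemma'' idea — that low-dimensional summands of $V_a\otimes V_a^*$ are insensitive to $a\bmod p^g$ — does correspond to a true step in the paper (Lemma \ref{maps-same}, proved via Lucas's theorem), but that is the easy part; the heart of the argument is the cancellation analysis that your transfer step elides.
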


In particular, we apply this theorem in the case $p=3$ to produce an infinite family of $p'$-representations which are not $p'$-invertible.

\begin{theorem}
    \label{mainthm-p3}
    Let $p=3$ and let $r$ and $s$ be nonnegative integers. Let $V$ be a cyclic representation for $\alpha_p(r,s)$ of dimension coprime to $p$ such that the diagram of $V$ satisfies either 
    \begin{enumerate}
        \item every column has length equal to 0 or $5$ modulo $9$;
        \item every row has length equal to 0 or $5$ modulo $9$.
    \end{enumerate}
    Then $V$ is not $p'$-invertible, and the tensor subcategory of $\overline{\rep}(\alpha_p(r,s))$ generated by $\overline{V}$ contains a semisimple tensor subcategory $\mc{C}$ which is equivalent to $\rep(\mathbb{Z}/2\mathbb{Z} \ltimes \mu_3)$, where $\mu_3$ is the dual group scheme to $\mathbb{Z}/3\mathbb{Z}$.
\end{theorem}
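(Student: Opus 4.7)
The plan is to derive \Cref{mainthm-p3} directly from \Cref{maintheorem} by analyzing a single small case, combined with a row/column duality argument and an explicit computation in the semisimplification to identify the tensor subcategory. The choice $h=5$ is dictated by the hypothesis: with $p=3$ we have $\lceil\log_3 5\rceil = 2$, so the modulus $p^g$ in \Cref{maintheorem} is exactly $9$, matching the congruence ``length $\equiv 0$ or $5\pmod 9$''.

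First I would consider $V_5$, the cyclic representation for $\alpha_p(r,s)$ of dimension $5$ with $x$ acting by zero (diagrammatically, a single column of length $5$), and decompose $V_5\otimes V_5^*$ using the combinatorial rules developed in the earlier sections of the paper. The key verification is that this decomposition contains a column summand $W$ generated in degree $(0,-2)$ of dimension $2\cdot 2+1 = 5$. Granted this, \Cref{maintheorem} immediately implies that $W$ is a direct summand of $V\otimes V^*$ for every cyclic $V$ satisfying hypothesis (1); since $\dim W = 5$ is coprime to $3$ and $W\not\cong\Bbbk$, this shows $V$ is not $3'$-invertible. Hypothesis (2) reduces to (1) via the automorphism $\alpha_p(r,s)\leftrightarrow\alpha_p(s,r)$ swapping $x$ and $y$, which transposes rows and columns in the diagram of any cyclic representation.

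For the tensor subcategory claim, I would work inside the subcategory of $\overline{\rep}(\alpha_p(r,s))$ generated by $\overline{W}$, which is contained in the subcategory generated by $\overline{V}$. To identify it with $\rep(\mathbb{Z}/2\mathbb{Z}\ltimes\mu_3)$ (which in characteristic $3$ is semisimple, with simples of dimensions $1$, $1$, $2$ analogous to the trivial, sign, and standard representations of $S_3$), I would decompose the first several tensor powers of $\overline{W}$ and exhibit therein an invertible object $L\ne\Bbbk$ satisfying $L^{\otimes 2}\cong\Bbbk$, together with a two-dimensional object $X$ satisfying the fusion rule $X\otimes X\cong\Bbbk\oplus L\oplus X$. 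These data will package into a fully faithful tensor embedding of $\rep(\mathbb{Z}/2\mathbb{Z}\ltimes\mu_3)$ into the subcategory generated by $\overline{V}$.

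The main obstacle is this final identification: although the underlying tensor product decompositions are computed by the same column/row combinatorics used in the preceding step, one must carefully track which indecomposable summands have dimension coprime to $3$ (and hence survive semisimplification) and verify that the surviving objects satisfy exactly the $S_3$-type fusion rules. The bookkeeping is delicate but finite, and once carried out should produce the required semisimple subcategory $\mc{C}$ directly from the structure of $\overline{W}$.
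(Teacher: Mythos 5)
Your strategy for the first half matches the paper's exactly: specialize \Cref{maintheorem} to $h=5$, $p=3$, $g=2$, use the earlier combinatorics (\Cref{maps-bijection} and \Cref{summand-formula}) to exhibit $V_5$ (and in fact also $V_7$) as a column summand of $V_5\otimes V_5^*$, conclude the same summand sits inside $V\otimes V^*$ whenever all column lengths are $\equiv 0,5\pmod 9$, and handle the row hypothesis by the $x\leftrightarrow y$ symmetry. That part is sound and is how the paper argues.

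The gap is in the final identification. You write that once one exhibits an invertible $L$ with $L^{\otimes 2}\cong\kk$ and a two-dimensional $X$ with $X\otimes X\cong\kk\oplus L\oplus X$, ``these data will package into a fully faithful tensor embedding of $\rep(\mathbb{Z}/2\mathbb{Z}\ltimes\mu_3)$.'' But a fusion-ring isomorphism does not, by itself, produce a tensor functor, let alone determine the tensor category: there can be inequivalent (even inequivalent symmetric) fusion categories sharing a Grothendieck ring, and nothing in the ``bookkeeping'' you describe supplies the associativity/commutativity data needed to build the functor. The paper circumvents this by structural means in \Cref{p3-v5-ex}: since the subcategory sits inside $\overline{\rep}(\alpha_3(r,s))$, the Coulembier--Etingof--Ostrik theorem gives a fiber functor to $\Ver_3$ and hence realizes it as $\rep_{\Ver_3}(H)$ for a super-group scheme $H$; the super-Nagata theorem forces $H$ to be linearly reductive; and then dimension counting on simples (two invertibles of dimension $1$ and one object of dimension $2$, totaling $1+1+4=6$) plus the Grothendieck ring pins down $H\cong\mathbb{Z}/2\mathbb{Z}\ltimes\mu_3$. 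Without invoking those Tannakian-type results, your final ``packaging'' step does not go through, so you should cite and use them at that point rather than attempting a purely combinatorial reconstruction.
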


In summary, this allows us to identify a particular nontrivial tensor subcategory of $\overline{\rep}(\alpha_3(r,s))$ (motivated by Question \ref{which-subcats}), and we do indeed find that it is in line with Conjecture \ref{etingof-conj}, that is, it is not equivalent to $\rep(G')$ for $G'$ a group with order divisible by a prime larger than $p$.

The paper is organized as follows. In Section \ref{background}, we set notation regarding the group scheme $\alpha_p(r,s)$ and its representations, recall the semisimplification construction, and give more background on Etingof's generalization of Benson's conjecture. In Section \ref{proof-mainthms} we prove the main theorems of the paper. We first characterize certain maps between tensor products of cyclic representations by a system of equations. Using this characterization, we are able to identify direct summands of $V \otimes V^*$ for certain representations $V$. Lastly, we consider the consequences in the case $p=3$, identifying a particular nontrivial tensor subcategory that appears in $\overline{\rep}(\alpha_3(r,s))$.

\section*{Acknowledgements}
We would like to thank Pavel Etingof for suggesting this project, for many helpful discussions, and for comments on an earlier draft of this paper. We also thank Dave Benson for helpful conversations and George Cao for sharing code he had written with us. Computations done in the Magma computer algebra system \cite{BCP1997} were essential to finding the results in this paper. This research was conducted as part of the MIT PRIMES-USA program, and we thank PRIMES-USA for making this project possible. Research of K.~B.~V.~was partially supported by NSF Postdoctoral Fellowship DMS-2103272 and by an AMS-Simons Travel Grant.

\section{Background}
\label{background}
\subsection{Representations for the group scheme $\alpha_p(r,s)$}

Let $\kk$ denote an algebraically closed field of characteristic $p$ for some prime $p>0$. We denote $G := \mathbb{Z}/p^r \mathbb{Z} \times \mathbb{Z} / p^s \mathbb{Z}.$ The generating set of $G$ is given by $\{g,h\}$, where $g^{p^r}=h^{p^s}=1$ and $gh=hg$. Define elements $x := g-1$ and $y := h-1$ in the group algebra $\kk G$. Note that $x$ and $y$ are both nilpotent because 
$$x^{p^r} = (g-1)^{p^{r}} = g^{p^r}-1 = 1-1=0,$$
and the analogous relation holds for $y$. Thus, to define a representation for $\kk G$, it suffices to define the actions of $x$ and $y$: these actions should commute and be nilpotent, of orders $p^r$ and $p^s$, respectively.

We define the Hopf algebra $\kk \alpha_p(r,s)$ in the following way. As an algebra, let $\kk \alpha_p(r,s)$ be the group algebra of $\mathbb{Z}/p^r \mathbb{Z}\times \mathbb{Z}/p^s \mathbb{Z}$; endow $\kk \alpha_p(r,s)$ with a comultiplication given by 
\[
x \mapsto x \otimes 1 + 1 \otimes x, \;\; y \mapsto y \otimes 1 + 1 \otimes y.
\]
This alternative definition of comultiplication provides $\kk \alpha_p(r,s)$ with the structure of a finite-dimensional cocommutative Hopf algebra, i.e.~ the dual of the coordinate ring of a finite group scheme (see \cite[Chapter 1]{Waterhouse1979}). The representations of $\kk \alpha_p(r,s)$ are canonically identified with the representations of this group scheme; we denote the group scheme by $\alpha_p(r,s)$. Note that while 
\[
\rep(\alpha_p(r,s)) \cong \rep(\kk G)
\]
as abelian categories, they are not equivalent as tensor categories, since the comultiplication for $\kk \alpha_p(r,s)$ and $\kk G$ differ. 

To define a $\mathbb{Z}^2$ grading on $\kk \alpha_p(r,s)$, we set the degree of $x$ to be $(1,0)$ and the degree of $y$ to be $(0,1)$, with $x$ and $y$ as elements of $\kk \alpha_p(r,s)$ as before. This grading gives $\kk \alpha_p(r,s)$ the structure of a graded Hopf algebra, and so we can consider graded representations, to which we associate graded diagrams.

To formally define these graded diagrams consistent with the notation used in \cite{cao2023decomposition}, consider a partition $\lambda = (\lambda_1, \ldots, \lambda_n)$ for which $\lambda_1\geq \lambda_2\geq \cdots \geq \lambda_n > 0$. The representation corresponding to such a partition that we consider has basis elements $v_{i,j}$ such that $0\leq i\leq n-1$ and $0\leq j\leq \lambda_{i+1}-1$. In such a representation, the $x$ action applied to the basis element $v_{i,j}$ yields $v_{i+1,j},$ and similarly the $y$ action applied to $v_{i,j}$ yields $v_{i,j+1}$. If $v_{i+1,j}$ or $v_{i,j+1}$ do not exist in these respective cases, we say that the basis element $v_{i,j}$ is sent to $0$.

Note that for this to define a valid representation, there must be at most $p^s$ values of $j$ for each $i$ such that $v_{i,j}$ exists, and similarly, at most $p^r$ values of $i$ for each $j$ for which $v_{i,j}$ exists. We can further pictorially depict the grading of these representations with graded diagrams. For all $i$ and $j$ for which $v_{i,j}$ is a basis element, we draw in the grid box, or cell, in degree $(i,j)$. We write a 1 in each box to indicate that there is one basis vector in that given degree. Colloquially, when looking at a diagram, the action of $x$ is ``move to the right", and the action of $y$ is ``move up".

\begin{example}
\label{monom-firstex}
For $G := \mathbb{Z}/5\mathbb{Z}\times \mathbb{Z}/25\mathbb{Z},$ the graded diagram corresponding to the partition $\lambda=(6,3,2,2)$ is given by 
\vspace{\baselineskip}
\begin{center}
\ytableausetup{notabloids}
\begin{ytableau}
1 \\
1 \\
1 \\
1 & 1\\
1 & 1 & 1 & 1\\
1 & 1 & 1 & 1\\
\end{ytableau}
\end{center}
\vspace{\baselineskip}
As can be seen from the diagram, there are basis vectors in $v_{i,j}$ for $(i,j)\in S$ for 
\[S := \{(0,0), (0,1), (0,2), (0,3), (0,4), (0,5), (1,0), (1,1), (1,2), (2,0), (2,1), (3,0), (3,1)\}.\]
We have, for instance, $x.v_{0,0} = v_{1,0}$, and $x.v_{3,0} = x.v_{1,2} = y.v_{0,5} = y.v_{1,2}=0$.
\end{example}

Throughout this paper, we often refer to the \textit{rows} and \textit{columns} within the graded diagrams corresponding to the representations mentioned above. Row $a$ refers to the group of cells corresponding to $v_{i,j}$ with $j = a.$ Similarly, column $b$ refers to the group of cells corresponding to $v_{i,j}$ for which $i=b$.

Furthermore, if $\lambda=(\lambda_1,...,\lambda_n)$ is a partition such that $\lambda_1 \geq \lambda_2 \geq... \geq \lambda_n$ and $\mu=(\mu_1,..., \mu_n)$ is another partition such that $\mu_1 \geq \mu_2 \geq... \geq \mu_n$ and $\mu_i < \lambda_i$ for all $i$, we can form a skew diagram consisting where we remove the diagram of $\mu$ from the diagram of $\lambda$. We will denote this skew partition by $\lambda\backslash \mu$. As long as there are at most $p^s$ cells remaining in each column (after having removed $\mu$ from $\lambda$) and there are at most $p^r$ cells remaining in each row (again, after having removed $\mu$ from $\lambda$), then the skew diagram corresponds to a representation for $\alpha_p(r,s)$: each cell corresponds to a basis element, the action of $x$ corresponds to moving right, and the action of $y$ corresponds to moving up. 

\begin{example}
\label{monom-secondex}
Suppose $\lambda=(6,3,2,2)$ and $\mu=(2,1,1,0)$. The skew diagram given by
\vspace{\baselineskip}
\begin{center}
\ytableausetup{notabloids}
\begin{ytableau}
1 \\
1 \\
1 \\
1 & 1\\
\none & 1 & 1 & 1\\
\none & \none & \none & 1\\
\end{ytableau}
\end{center}
\vspace{\baselineskip}
corresponds to a representation $V$ for $\alpha_p(r,s)$ if $4 \leq p^s$ and $3 \leq p^r$. If it exists, this representation $V$ is 9-dimensional.
\end{example}

We now verify that when $V$ corresponds to a connected (skew) diagram, then $V$ is indecomposable. Although this is not difficult to observe, we prove a more general result for the sake of future application.

\begin{proposition}
\label{gr-group-comp}
Let $G$ be a finite group scheme with a $\mathbb{Z}^n$-grading. Let $V$ be a finite-dimensional graded representation for $G$. Then there exists a decomposition $V \cong V_1 \oplus...\oplus V_m$ of $V$ into indecomposable $G$-representations $V_1,\cdots,V_m$ in a way which respects the grading on $V$.
\end{proposition}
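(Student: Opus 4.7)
The plan is to study the endomorphism algebra $E := \End_G(V)$, which inherits a $\mathbb{Z}^n$-grading from $V$ by letting $E_\mu$ consist of those $G$-equivariant maps $f \colon V \to V$ satisfying $f(V_\nu) \subseteq V_{\nu+\mu}$ for every $\nu$. A decomposition of $V$ into indecomposable $G$-subrepresentations respecting the grading corresponds precisely to a complete system of primitive orthogonal idempotents of $E$ all lying in the degree-zero component $E_0$, so the task reduces to producing such a system.

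First I would observe that any homogeneous $f \in E_\mu$ with $\mu \neq 0$ is nilpotent, since $f^k \in E_{k\mu}$ vanishes for large $k$ by finite-dimensionality of $V$. From this it follows by standard arguments that the Jacobson radical $J(E)$ is a graded ideal of $E$, and that the induced surjection $E_0 \twoheadrightarrow \bar{E}_0 := (E/J(E))_0$ has nilpotent kernel $J(E) \cap E_0$. Hence idempotents in $\bar{E}_0$ can be lifted to orthogonal idempotents in $E_0$ by the standard iterative lifting procedure, and it suffices to produce a complete system of primitive orthogonal idempotents of $\bar{E} := E/J(E)$ inside $\bar{E}_0$. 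Since $\bar{E}$ is a graded semisimple $\kk$-algebra, its central primitive idempotents, being uniquely determined by the ring structure, are fixed by the torus $T := \mathbb{G}_m^n$ whose action on $\bar{E}$ encodes the grading, so they lie in $\bar{E}_0$. This reduces matters to a single Wedderburn factor, a graded matrix algebra $M_m(\kk)$, which arises as the endomorphism algebra of the multiplicity space $N$ of an isoclass of indecomposable $A$-summand of $V$; choosing a homogeneous basis of $N$ (with respect to a grading induced from $V$) then yields diagonal matrix units in $(M_m(\kk))_0$ supplying the required primitive orthogonal idempotents.

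The delicate step is showing that the grading on $V$ descends to a grading on the multiplicity space $N$ of each isotypic component (equivalently, that each Wedderburn factor of $\bar{E}$ is an \emph{elementary} graded matrix algebra). This can be justified by first noting that the isotypic decomposition of $V$ as an ungraded $A$-module is intrinsic, and hence $T$-stable, so each isotypic component is itself a graded $A$-submodule of $V$; then within a single isotypic component $L \otimes N$, the $A$-module structure combined with the grading on $V$ furnishes a compatible grading on $N$, whose decomposition into homogeneous lines then produces the required idempotents.
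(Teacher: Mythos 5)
Your proposal follows the same overall strategy as the paper's proof: pass to the semisimple quotient $\bar{E} = E/J(E)$ (using that $J(E)$ is a graded ideal, which the paper attributes to a result of Kelarev), observe that the torus $T = \mathbb{G}_m^n$ encoding the grading acts on $\bar{E}$ by algebra automorphisms fixing each central primitive idempotent (by connectedness of $T$), and thereby reduce to producing, for each Wedderburn factor $M_m(\kk)$, a $T$-stable system of $m$ orthogonal idempotents. Up to this reduction the two arguments coincide and your preliminary steps (nilpotence of homogeneous endomorphisms of nonzero degree, lifting of idempotents from $\bar{E}_0$ to $E_0$) are sound.

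The gap is exactly at the step you flag as delicate, and your justification of it is circular. You write that ``the $A$-module structure combined with the grading on $V$ furnishes a compatible grading on $N$,'' but this is an assertion, not a proof. The identification $W \cong L \otimes N$ is a choice and is not $T$-equivariant a priori: the ungraded indecomposable $L$ need not carry a $T$-action compatible with the one on $W$, and without that there is no induced grading on $N$. What you genuinely have is a $T$-action on $\End_A(W) \cong M_m(\kk)$ by algebra automorphisms, i.e.\ a homomorphism of algebraic groups $T \to \PGL_m(\kk)$; producing a grading on $N$ is precisely the question of whether this lifts along $\GL_m(\kk) \to \PGL_m(\kk)$, which is equivalent to your ``elementary graded matrix algebra'' claim. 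This lifting is true but requires an argument: the paper establishes it by writing $\PGL_m \cong \SL_m/\mu_m$, observing that there are no nontrivial homomorphisms $T \to \mu_m$ so the map lifts to $T \to \SL_m$, and then conjugating the image of $T$ into the diagonal maximal torus, from which the degree-zero orthogonal idempotents can be read off. Your proof needs some version of this step; as written it assumes the conclusion of the one nontrivial part of the argument.
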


\begin{proof}
Write $\kk G:=\mc{O}(G)^*$ for the dual of the coordinate ring of $G$, that is, $\kk G$ is a cocommutative Hopf algebra such that $G$-representations are in natural bijection with $\kk G$-representations. We know that $\kk G$ is graded by $L:=\mathbb{Z}^n$ by assumption and finite-dimensional. Gradings by $L$ correspond to coactions by the group algebra $\kk L$, and $\kk L$ is a commutative and cocommutative Hopf algebra. The algebraic torus $T:=(\kk^{\times})^n$ is the group scheme which corresponds to $\kk L,$ that is, $\mc{O}(T) \cong \kk L$, see \cite[Example 11.15]{Lorenz2018}; and so there is a rational torus action of $T$ on $\kk G$. Likewise, we have an action of $T$ on $V$, since $V$ is assumed to be graded. Explicitly, the component $V_{\lambda}$ of $V$ graded by $\lambda:=(a_1,..., a_n) \in \mathbb{Z}^n$ is the $\lambda$ weight space of $T$, that is, $\alpha:=(t_1,...,t_n)$ acts on $v \in V_{\lambda}$ by 
\[
\alpha.v=\prod_{i=1}^n t_i^{a_i} v.
\]

Since the action of $G$ on $V$ is compatible with the $T$-action (i.e., the $L$-grading), $T$ acts naturally on $\End_G(V)$. By \cite{Kelarev1992}, the radical of $\End_G(V)$ is an $L$-homogeneous ideal, and so $T$ acts naturally on $A:=\End_G(V)/\Rad(\End_G(V))$, which is isomorphic to a product
\[
A=\End_G(V) / \Rad(\End_G(V)) \cong M_{n_1}(\kk) \times M_{n_2}(\kk) \times... \times M_{n_m}(\kk)
\]
of matrix algebras \cite[Theorem 3.5.4]{Etingof2011-ym}. Recall that the matrix blocks $M_{n_i}(\kk)$ correspond to the pairwise non-isomorphic indecomposable summands of $V$. Since the torus $T$ is connected \cite[Exercise 10.7]{MalleTesterman2011}, the action of $T$ on $A$ fixes each of the matrix algebra factors, that is, it restricts to an action of $T$ on $M_{n_i}(\kk)$ for each $i$. Recall that an action of $T$ on $M_{n_i}(\kk)$ corresponds to a map of algebraic groups
\[
T \to \PGL_{n_i}(\kk),
\]
see \cite[Exercise 10.17]{MalleTesterman2011}. Since we are assuming $\kk$ is algebraically closed, 
\[
\PGL_{n_i}(\kk) \cong \PSL_{n_i}(\kk) :=\SL_{n_i}(\kk) / \mu_{n_i},
\]
where $\mu_{n_i}$ is the group of $n_i$-roots of unity in $\kk$ embedded as scalar matrices in $\SL_{n_i}(\kk)$, see \cite[Example 5.49]{Milne}. Since there are no maps $T \to \mu_{n_i}$, the map $T\to \PGL_{n_i}(\kk)$ lifts to a map $T \to \SL_{n_i}(\kk)$. Since all maximal tori in $\SL_{n_i}(\kk)$ are conjugate (see \cite[Theorem 4.4(b)]{MalleTesterman2011}), we can choose a basis for $\kk^{n_i}$ such that the image of $T$ in $\SL_{n_i}(\kk)$ consists of diagonal matrices. Therefore, we can choose a complete system of orthogonal idempotents for $V$ which are preserved by the action of $T$; that is, we can decompose $V$ into indecomposable summands which are also $L$-graded, and the decomposition respects the grading. 
\end{proof}

\begin{corollary}
\label{indecomp-gr}
    If $V$ is an $\alpha_p(r,s)$-representation corresponding to a partition $\lambda \in \mathbb{Z}^n$, then $V$ is indecomposable. More generally, if $V$ is an $\alpha_p(r,s)$-representation corresponding to a skew partition $\lambda \backslash \mu$, then $V$ is indecomposable if and only if the diagram of $V$ is connected.
\end{corollary}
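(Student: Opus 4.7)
The plan is to apply Proposition \ref{gr-group-comp} directly. Since $\alpha_p(r,s)$ is $\mathbb{Z}^2$-graded and the representation $V$ attached to a skew partition $\lambda \backslash \mu$ is graded (with $v_{i,j}$ in degree $(i,j)$), we obtain a decomposition $V \cong V_1 \oplus \cdots \oplus V_n$ into indecomposable summands that respects the $\mathbb{Z}^2$-grading. Hence it suffices to classify grading-compatible decompositions of $V$, and then observe that any indecomposable ungraded summand is forced to be graded.

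The key observation is that each graded piece $V_{(i,j)}$ is at most one-dimensional, spanned by $v_{i,j}$ exactly when the cell $(i,j)$ is present in the skew diagram. Therefore a graded subrepresentation of $V$ is determined by choosing a subset $S$ of cells such that the corresponding span is stable under the action of $x$ and $y$. Since $x \cdot v_{i,j} = v_{i+1,j}$ and $y \cdot v_{i,j} = v_{i,j+1}$ whenever those basis vectors exist, stability under $x$ and $y$ is the statement that $S$ is closed under moving one step to the right or one step up within the diagram. Dually, the quotient by such a subrepresentation corresponds to the complementary cell set, and the condition for this complement to also be a subrepresentation (so that we really have a direct sum decomposition of graded modules) is that $S$ and its complement are each closed under both right-moves and up-moves, i.e., that $S$ is a union of connected components of the diagram.

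Combining these points, the grading-respecting decomposition provided by Proposition \ref{gr-group-comp} corresponds to a partition of the cells of $\lambda \backslash \mu$ into sets $S_1, \dots, S_n$, each of which is a union of connected components, and each $V_k$ is indecomposable. Since a union of two or more connected components would further decompose as a graded representation (hence as a representation by the argument above), each $S_k$ must in fact be a single connected component. Therefore the number of indecomposable summands equals the number of connected components of the diagram, and $V$ is indecomposable if and only if its diagram is connected. The ordinary (non-skew) partition case is the specialization $\mu = \emptyset$, whose diagram is automatically connected, yielding the first assertion of the corollary.

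There is no genuine obstacle here; the work is done by Proposition \ref{gr-group-comp}, and the only content specific to the skew-diagram setting is the elementary translation between ``subset of cells closed under right and up moves'' and ``union of connected components.''
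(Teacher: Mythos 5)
Your proposal is correct and takes essentially the same route as the paper: both reduce the statement to the graded setting via Proposition \ref{gr-group-comp} and then observe that a graded direct summand corresponds to a union of connected components of the diagram (a step the paper dismisses as ``straightforward to see'' and you spell out). The one cosmetic slip is in your closing sentence, which describes the combinatorial translation as ``subset of cells closed under right and up moves'' equals ``union of connected components''; as your own argument correctly shows, the right condition is that \emph{both} $S$ and its complement be closed under right- and up-moves, which is what forces $S$ to be a union of connected components.
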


\begin{proof}
    It is straightforward to see that $V$ is indecomposable as a graded $\alpha_p(r,s)$-representation if and only if its diagram is connected. By Proposition \ref{gr-group-comp}, if $V$ is indecomposable as a graded $\alpha_p(r,s)$-representation, then it is indecomposable in $\rep(\alpha_p(r,s))$. 
\end{proof}

\begin{example}
    The representations corresponding to the diagrams given in Example \ref{monom-firstex} and \ref{monom-secondex} are both indecomposable in $\rep(\alpha_p(r,s))$.
\end{example}

\subsection{Induced $p'$-symmetries and the generalized Benson conjecture}

Let $G$ be a group whose order is coprime to $p$; in this case we call $G$ a $p'$-group. Recall that a $G$-representation whose dimension is divisible by $p$ is called {\it{negligible}}, a $G$-representation is called {\it{non-negligible}} if it is not negligible. A special case of non-negligible representations are the {\it{endotrivial}} representations, originally introduced by Dade in \cite{Dade1978}, which are those representations $V$ such that $V \otimes V^*$ decomposes into $\kk$ direct sum with a projective representation. The endotrivial representations have attracted significant focus in modular representation theory in recent years, see e.g.~ \cite{Puig1990,Carlson2006,CMN2006,CMN2009,CarlsonNakano2011}. Every endotrivial representation is $p'$-invertible, that is, if $V$ is endotrivial then every indecomposable summand of $V \otimes V^*$ other than $\kk$ has dimension divisible by $p$. 

To state the generalized Benson conjecture, we recall the semisimplification of a tensor category, originally due to Barrett and Westbury \cite{BarrettWestbury1999}, studied extensively by Etingof--Ostrik in \cite{Etingof-Ostrik}. 

\begin{theorem}
\label{semis-recall}
Let $\mathcal{C}$ be a spherical tensor category. Then there exists a semisimple tensor category $\overline{\mathcal{C}}$, called the {\emph{semisimplification of $\mathcal{C}$}}, where:
\begin{enumerate}
    \item objects of $\mathcal{C}$ and $\overline{\mathcal{C}}$ are in bijection, and there is a canonical tensor functor $\mathcal{C} \to \overline{\mathcal{C}}$;
    \item the simple objects of $\overline{\mathcal{C}}$ correspond precisely to the indecomposable non-negligible objects of $\mathcal{C}$.
\end{enumerate}
\end{theorem}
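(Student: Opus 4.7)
The plan is to construct $\overline{\mathcal{C}}$ explicitly as the quotient of $\mathcal{C}$ by the tensor ideal of negligible morphisms, following the standard construction of Barrett--Westbury and Etingof--Ostrik. Call a morphism $f\in\Hom_{\mathcal{C}}(X,Y)$ \emph{negligible} if $\Trace(g\circ f)=0$ for every $g\in\Hom_{\mathcal{C}}(Y,X)$, where $\Trace$ denotes the categorical trace afforded by the spherical structure, and write $\mathcal{N}(X,Y)$ for the set of such morphisms. First I would verify that $\mathcal{N}$ is a two-sided tensor ideal. Closure under left and right composition follows from cyclicity of the trace, $\Trace(ab)=\Trace(ba)$, which is valid in any spherical category. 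Closure under tensoring by arbitrary morphisms uses multiplicativity of the trace under tensor products together with a duality argument: to show that $f\otimes\id_Z$ is negligible whenever $f$ is, one rewrites $\Trace\bigl(h\circ(f\otimes\id_Z)\bigr)$ by dualizing $Z$ via $\ev_Z$ and $\coev_Z$ as a trace of the form $\Trace(h'\circ f)$ for some $h'$ built from $h$, and then invokes negligibility of $f$.

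With $\mathcal{N}$ established as a tensor ideal, define $\overline{\mathcal{C}}$ to have the same objects as $\mathcal{C}$ and morphism spaces $\Hom_{\overline{\mathcal{C}}}(X,Y):=\Hom_{\mathcal{C}}(X,Y)/\mathcal{N}(X,Y)$. The quotient inherits the monoidal and spherical structure of $\mathcal{C}$ together with the evident canonical tensor functor $\mathcal{C}\to\overline{\mathcal{C}}$, giving item (1). For item (2), I would apply Krull--Schmidt in $\mathcal{C}$ (valid because Hom spaces are finite-dimensional and endomorphism rings of indecomposables are local) to reduce to understanding the image of each indecomposable $X$. The ring $\End_{\mathcal{C}}(X)$ is local with nilpotent Jacobson radical; every element of this radical is nilpotent and hence has trace zero, while $\Trace(\id_X)=\dim X$. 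Consequently $\mathcal{N}(X,X)$ equals the Jacobson radical of $\End_{\mathcal{C}}(X)$ when $\dim X\neq 0$ and equals all of $\End_{\mathcal{C}}(X)$ when $\dim X=0$, so $\End_{\overline{\mathcal{C}}}(X)$ is isomorphic to $\kk$ in the first case and vanishes in the second.

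To finish item (2), I would show that non-isomorphic indecomposables $X\not\cong Y$ of nonzero dimension have no nonzero morphisms between them in $\overline{\mathcal{C}}$: for any $f\colon X\to Y$ and $h\colon Y\to X$, the composite $h\circ f\in\End_{\mathcal{C}}(X)$ cannot be invertible (otherwise $f$ would split, contradicting $X\not\cong Y$ together with the indecomposability of both), hence lies in the Jacobson radical and has trace zero, so $f$ is negligible. Combined with the Krull--Schmidt decomposition, this identifies $\overline{\mathcal{C}}$ as the semisimple additive category on the simples $\{\overline{X}:X\text{ indecomposable in }\mathcal{C},\ \dim X\neq 0\}$. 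The main obstacle I expect is the tensor-ideal property of $\mathcal{N}$: composition closure is immediate from cyclicity, but closure under tensoring with arbitrary (non-endomorphism) morphisms genuinely requires the spherical hypothesis and a careful use of duality to re-express tensor-product traces as ordinary composition traces of $f$, to which negligibility then applies. The remaining steps are standard consequences of Krull--Schmidt and the local structure of endomorphism rings of indecomposables.
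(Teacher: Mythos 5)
The paper does not prove this theorem; it is stated as a background recall, with the construction attributed to Barrett--Westbury \cite{BarrettWestbury1999} and Etingof--Ostrik \cite{Etingof-Ostrik}, and the reader referred there. Your sketch correctly reproduces that standard construction (quotient by the tensor ideal of negligible morphisms, Krull--Schmidt, local endomorphism rings of indecomposables, partial trace to get the tensor-ideal property), so it agrees with the argument the paper is implicitly citing rather than with any proof given in the paper itself.
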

For background on tensor categories (and in particular spherical and symmetric tensor categories), see \cite{EGNO2015, EtingofKannan2023}. The category $\rep(G)$ of representations of a finite group is indeed a spherical tensor category, so it has a semisimplification $\overline{\rep}(G)$. If $V$ is a representation of $G$, we denote the corresponding object of $\overline{\rep}(G)$ by $\overline{V}$. 

We will say that a tensor category $\mc{C}$ is equivalent to $\rep(G)$ {\it{up to twisting the symmetric structure}} if $\mc{C} \cong \rep(G,z)$ for some central $z \in G$ with $z^2=1$ as in \cite[Example 9.9.1(3)]{EGNO2015}. Since we will not use the symmetric structure explicitly, we omit recalling the details.

\begin{definition}
    \label{induced-symm}
    Let $G$ be a $p$-group. Let $V$ be an indecomposable representation, and consider the full subcategory $\mc{D}$ of $\overline{\rep}(G)$ generated, as a tensor category, by the image of $V$. If $G'$ is a $p'$-group such that $\rep(G')$ is tensor-equivalent up to twisting the symmetric structure to a full subcategory $\mc{C}$ of $\mathcal{D}$, then we say that $(\mathcal{C},G')$ is an {\it{induced $p'$-symmetry of $V$}}. 
\end{definition}

To form the full tensor subcategory generated by the image of $V$, one iteratively takes dual objects, direct sums, direct summands, and tensor products.

\begin{remark}
    Suppose $\mathcal{C}$ is any full tensor subcategory of $\overline{\rep}(G)$. By \cite{Coulembier-Etingof-Ostrik}, $\mathcal{C}$ admits a fiber functor to $\Ver_p:=\overline{\rep}(\mathbb{Z}/p\mathbb{Z})$ (the universal Verlinde category introduced in \cite{Ostrik2020}), and hence by Tannakian reconstruction (see \cite[Section 1.7]{EtingofKannan2023}) it can be written $\mathcal{C} \cong \rep_{\Ver_{p}}(H)$ for some group scheme $H$ in $\Ver_p$. Note that it is not necessarily true that there is an actual finite group whose representation category is equivalent to $\mc{C}$, although if $p=2$ then it can indeed be written as representations of a group scheme, and if $p=3$ then it can be written as representations of a supergroup scheme.
\end{remark}

\begin{example}
    \label{pprime-inv}

    Let $V$ be a $p'$-invertible representation of $G$; by definition, $\overline{V} \otimes \overline{V}^* \cong \overline{\kk}$, i.e.~ $\overline{V}$ and $\overline{V^*}$ are tensor-invertible in $\overline{\rep}(G)$. Therefore, for any positive integer $n$, $\overline{V}^{\otimes n}$ and $(\overline{V^*})^{\otimes n}$ are also invertible, and so $\overline{V}^{\otimes n}$ and $(\overline{V}^*)^{\otimes n}$ are simple by \cite[Exercise 4.3.11(3)]{EGNO2015}. Therefore, any subcategory of the tensor subcategory generated by $\overline{V}$ is generated by the object $\overline{V}^{\otimes n}$ for some integer $n$, and simply consists of direct sums of the tensor powers of that object and its dual. If $V$ is self-dual, then the subcategory generated by $\overline{V}$ consists of direct sums of itself and $\overline{\kk}$. Hence the two simple objects of the subcategory generated by $\overline{V}$ would both be dimension 1. Here (and below) by dimension we mean Frobenius--Perron dimension, see \cite[Section 4.5]{EGNO2015}. We are using the fact that invertible objects all have dimension 1 \cite[Exercise 4.5.9]{EGNO2015} and that Frobenius--Perron dimension corresponds to vector space dimension \cite[Exercise 5.13.8]{EGNO2015} for finite group schemes. Note that the semisimplification functor does not necessarily respect dimension, \cite[Section 1.2]{Ostrik2020}, that is, when we discuss the dimension of $\overline{V}$ in $\overline{\rep}(G)$, this dimension might not be the same as the vector space dimension of $V$. By the discussion above and since the sum of the squares of the dimensions of the irreducible representations of a group must add up to the order of the group \cite[Corollary 1.35]{Lorenz2018}, the only possible induced $p'$-symmetries would be for $G'=\mathbb{Z}/2\mathbb{Z}$ the cyclic group of order 2 or for $G'=0$. 
\end{example}

\begin{example}
\label{benson-pprime-symm}
Let $p=2$. Benson's conjecture (Conjecture \ref{Bensonconj}) would imply that for any $V$ indecomposable in $\rep(G)$, either (1) $\overline{V} \cong 0$ (if the dimension of $V$ is divisible by $p$) or (2) $V$ is $p'$-invertible. If $V$ is $p'$-invertible, then if $V$ is self-dual, we have no nontrivial induced $p'$-symmetries by Example \ref{pprime-inv}. Assuming Benson's tensor powers conjecture (see \cite[Conjecture 1.0.2]{cao2023decomposition}), if $V$ is not self-dual then there are polynomials $f_0(x),..., f_{m-1}(x)$ such that the dimension of the unique odd-dimensional summand of $V^{\otimes n}$ is $f_i(n)$, if $n$ is congruent to $i$ modulo $m$. If the polynomials $f_0, \cdots f_{m-1}$ are all at least degree 1, then each of the tensor powers $\overline{V}^{\otimes n}$ is simple, and they are all pairwise non-isomorphic (by dimension consideration). Hence every tensor subcategory of the subcategory generated by $\overline{V}$ would have infinitely many objects, and therefore admit no induced $p'$-symmetries. Cases where one of the polynomials $f_i$ is degree 0 happen when one has a representation $V$ such that $V^{\otimes n} \cong \kk$. Examples of this include the so-called torsion endotrivial modules; these were classified by Carlson and Th\'evenaz, see \cite{CarlsonThevenaz2005}. In substantial computer evidence collected by Dave Benson, any induced $p'$-symmetry for $p=2$ has $G' \cong \mathbb{Z}/2^n\mathbb{Z}$ for some $n$, see \cite[p.~ 230]{EtingofKannan2023}.
\end{example}

Based on Example \ref{benson-pprime-symm}, Etingof has proposed the generalization of Benson's conjecture (\Cref{etingof-conj}), which we now restate in the language of induced $p'$-symmetries. 

\begin{conjecture}[Generalized Benson conjecture]
\label{etingof-conj-2} Let $V$ be an indecomposable representation for a $p$-group $G$ over an algebraically closed field $\kk$ of characteristic $p$. If $(\mc{C}, G')$ is an induced $p'$-symmetry of $V$, then every prime dividing the order of $G'$ is less than $p$.
\end{conjecture}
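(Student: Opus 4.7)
The statement as phrased is an open conjecture, so the plan is really a roadmap for an attack rather than a complete argument. The natural starting point is to translate the hypothesis into category-theoretic data and then to exploit the structural constraints on tensor subcategories of $\overline{\rep}(G)$ coming from the universal Verlinde category $\Ver_p$. Specifically, if $(\mc{C}, G')$ is an induced $p'$-symmetry of $V$, I would first reduce to the case that $G'$ is cyclic of prime order $q$, by restricting to a cyclic subgroup of $G'$ whose order is divisible by some prime $q \geq p$ dividing $|G'|$. In that reduced case, $\rep(G')$ (up to twisting the symmetric structure) is generated by a single invertible object of order $q$, so the goal becomes showing that $\overline{\rep}(G)$ admits no invertible object $X$ whose tensor order is a prime $q \geq p$.

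Next I would use the Coulembier--Etingof--Ostrik fiber functor to $\Ver_p$ recalled in the remark after Definition \ref{induced-symm} to write $\mc{C} \cong \rep_{\Ver_p}(H)$ for some affine group scheme $H$ in $\Ver_p$. A hypothetical invertible $X \in \mc{C}$ of order $q$ corresponds to a homomorphism $H \to \mu_q$ in $\Ver_p$, where $\mu_q$ is the diagonalizable group scheme of order $q$. The Frobenius--Perron dimension of $X$ must be $1$, and its tensor powers $X, X^{\otimes 2}, \ldots, X^{\otimes q}$ must be pairwise non-isomorphic simples of FP-dimension $1$. I would compare this against what $\Ver_p$ actually admits: the objects of FP-dimension $1$ in $\Ver_p$ form the trivial Picard group, and more generally the group of invertible objects in any $\rep_{\Ver_p}(H)$ should be controlled by character-type data that, for finite group schemes of $p$-power type, cannot produce torsion of order $\geq p$. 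Making this last step precise — translating ``character group of $H$ in $\Ver_p$'' into constraints forcing $q<p$ — is the main conceptual input I would need.

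The chief obstacle, and the reason this remains conjectural, is exactly this last point: we do not in general know enough about the structure of group schemes in $\Ver_p$ to rule out characters of order $\geq p$ on the ones that can arise as $H = \underline{\mathrm{Aut}}^\otimes$ of a fiber functor from a subcategory of $\overline{\rep}(G)$. For $p=2$ the conclusion collapses to $G' = 1$, which is the original Benson Conjecture \ref{Bensonconj}, so any general proof would have Benson's conjecture as a special case; this is a good sanity check but also a warning that the full conjecture is at least as hard. For $p=3$, the main theorems of the paper (Theorem \ref{maintheorem} and Theorem \ref{mainthm-p3}) explicitly realize the borderline case $G' = \mathbb{Z}/2\mathbb{Z} \ltimes \mu_3$ which \emph{is} allowed by the conjecture, showing that the bound in the conjecture is sharp and that any proof strategy must delicately distinguish $q = p-1$ from $q = p$.

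A complementary and possibly more tractable intermediate goal would be to prove the conjecture under the extra assumption that $\mc{C}$ is \emph{Tannakian} (i.e.\ $\mc{C} \cong \rep(H)$ for a finite group scheme $H$ in vector spaces rather than in $\Ver_p$); in that case one can try to exhibit $H$ explicitly by recognizing idempotents in suitable $\End_G(V^{\otimes n})$ and analyzing the induced action of $G'$ on character tables modulo $p$, using the Benson--Carlson criterion for $\kk$ to be a summand in conjunction with the dimension arithmetic of Example \ref{pprime-inv}. Even partial results of this form — e.g., proving the conjecture for a specific class such as elementary abelian $p$-groups, or for $\alpha_p(r,s)$ — would already be a substantial advance, and the techniques developed in Section \ref{proof-mainthms} of the present paper (characterizing morphisms between tensor products of cyclic representations by systems of equations) look like a reasonable toolkit with which to attempt this.
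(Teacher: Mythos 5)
The statement is indeed an open conjecture in the paper, not a theorem, and you correctly recognize this; the paper offers only evidence in its favor (Example~\ref{p3-v5-ex}, Theorem~\ref{mainthm-p3}) rather than a proof, so there is no ``paper proof'' to compare against. Your roadmap is a reasonable sketch of a research direction, but it contains a couple of concrete misconceptions worth flagging.

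First, the reduction step is not valid as stated. If $q$ divides $|G'|$ then $G'$ has a cyclic \emph{subgroup} of order $q$, but $\rep(\mathbb{Z}/q\mathbb{Z})$ is a full tensor subcategory of $\rep(G')$ only when $\mathbb{Z}/q\mathbb{Z}$ is a \emph{quotient} of $G'$: full tensor subcategories of $\rep(G')$ correspond to quotient groups, not subgroups, and restriction to a subgroup is a tensor functor \emph{out} of $\rep(G')$, not an inclusion. For instance, if $G'$ were a nonabelian simple group the only quotient-induced subcategories would be trivial, yet many primes divide $|G'|$. Relatedly, your reformulation in terms of invertible objects only sees characters of $G'$, i.e.\ $\Hom(G'^{\mathrm{ab}},\kk^\times)$; since the conjecture bounds \emph{all} primes dividing $|G'|$ and not just those surviving to the abelianization, an argument that rules out invertible objects of prime order $\geq p$ would not prove the conjecture. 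Any serious attack must control higher-dimensional simples, e.g.\ via Frobenius--Perron dimension arithmetic in the ambient $\Ver_p$-module structure.

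Second, the claim that ``the objects of FP-dimension $1$ in $\Ver_p$ form the trivial Picard group'' is false for $p>2$: the object $\overline{L_{p-1}}$ is invertible of order two (supervector spaces sit inside $\Ver_p$), so $\mathrm{Pic}(\Ver_p)\cong \mathbb{Z}/2\mathbb{Z}$. This does not doom the strategy since $2<p$ when $p>2$, but the statement as written would need to be replaced by an actual computation of the group of invertible objects in $\rep_{\Ver_p}(H)$, taking into account the $\Ver_p$-linear twisting. Finally, a minor point of phrasing: the category $\rep(\mathbb{Z}/2\mathbb{Z}\ltimes\mu_3)$ appearing in Theorem~\ref{mainthm-p3} is not the representation category of a finite group (it is a group scheme in characteristic $3$), and the paper is explicit in Example~\ref{p3-v5-ex} that the induced $p'$-symmetry extracted from it is only for $G'=\mathbb{Z}/2\mathbb{Z}$, so it is not literally a ``borderline $G'$''; rather it shows the tensor subcategory can be strictly larger than any $\rep(G')$ with $G'$ an ordinary $p'$-group while still being consistent with the conjecture. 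Your closing suggestion of first treating the Tannakian case, or restricting to $\alpha_p(r,s)$ using the Section~\ref{proof-mainthms} machinery, is sensible and in the spirit of what the paper actually accomplishes.
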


In particular, to gather information related to Etingof's conjecture, recall that we have the broader question as posed in Question \ref{which-subcats}: What tensor categories appear as subcategories of $\overline{\rep}(G)$? We will consider these questions throughout this paper for $\alpha_p(r,s)$; although this is a group scheme rather than a finite group, Etingof has conjectured that the analogous statement to Conjecture \ref{etingof-conj-2} holds for $\alpha_p(r,s)$ as well.

In the remainder of this section, we discuss a few concrete examples of induced $p'$-symmetries and non-negligible representations (i.e. $p'$-representations) which are not $p'$-invertible. Generalizing some of these examples to infinite families of $p'$-representations which are not $p'$-invertible is done in the following section.

\begin{example}
    Suppose $G=\mathbb{Z}/p\mathbb{Z}$ is the cyclic group of order $p$. The semisimplification $\overline{\rep}(G)$ is the previously mentioned universal Verlinde category $\Ver_p$ introduced by Ostrik \cite{Ostrik2020}. This category has $p-1$ simple objects, denoted $\overline{L_1}, \cdots, \overline{L_{p-1}}$. For $p=2$, $\Ver_p$ is equivalent to the category of vector spaces, hence has no nontrivial induced $p'$-symmetries. For $p=3$, $\Ver_p$ is equivalent to the category of super vector spaces \cite[Example 3.2(ii)]{Ostrik2020}. For $p>3$, the only tensor subcategories of $\Ver_p$ are: a copy of the category of vector spaces (direct sums of the tensor unit $\overline{L_{1}}$), a copy of super-vector spaces (direct sums of $\overline{L_{1}}$ and $\overline{L_{p-1}}$), the category known as $\Ver_p^+$ (direct sums of the objects $\overline{L_1}, \overline{L_3}, \overline{L_5}\cdots, \overline{L_{p-2}}$), and the whole category $\Ver_p$ \cite[Proposition 3.3(ii)]{Ostrik2020}. Since the category of super-vector spaces is equivalent to $\rep(\mathbb{Z}/2\mathbb{Z})$ up to a twisting of the symmetric structure (assuming $p>2$) \cite[Example 9.9.3]{EGNO2015}, and since $\Ver_p^+$ is not tensor-equivalent to $\rep(G')$ for any finite group $G'$ \cite[Theorem 4.71(i)]{Benson-Etingof-Ostrik}, the only induced $p'$-symmetries for $\mathbb{Z}/p\mathbb{Z}$ are for $G'$ as the trivial group and as $\mathbb{Z}/2\mathbb{Z}$.
\end{example}

\begin{example}
\label{p3-v5-ex}
    Let $V_5$ be the cyclic 5-dimensional representation and $V_7$ be the cyclic 7-dimensional representation for $\alpha_3(0,2)$. That is, $V_5$ and $V_7$ correspond to the diagrams 
    \vspace{\baselineskip}
    \begin{center}
    \ytableausetup{notabloids}
\begin{ytableau}
1 \\
1 \\
1 \\
1\\
1 \\
\end{ytableau}, \qquad
\begin{ytableau}
1 \\
1 \\
1 \\
1\\
1 \\
1\\
1\\
\end{ytableau}
\end{center}
\vspace{\baselineskip}
 in the language of Section \ref{background}. We obtain the following multiplication table for the tensor products of $\overline{V_5}$ and $\overline{V_7}$ by direct computation (we also include a proof of this below in Section  \ref{sect-char3} using the results of Section \ref{class-maps}): 
\vspace{\baselineskip}
 \begin{center}
 \bgroup
\def\arraystretch{1.5}
\begin{tabular}{ c|c|c| } 
  & $\overline{V_5}$ & $\overline{V_{7}}$\\ \hline
  $\overline{V_5}$ & $\overline{\kk} \oplus\overline{V_5} \oplus \overline{V_{7}}$ & $\overline{V_5}$ \\ \hline
  $\overline{V_7}$ & $\overline{V_5}$ & $\overline{\kk}$ \\ \hline
\end{tabular}
\egroup
\end{center}
\vspace{\baselineskip}
That is, for instance, we have $\overline{V_5} \otimes \overline{V_7} \cong \overline{V_5}$. Since $\kk$ appears as a summand of $V_5 \otimes V_5$ and $V_7 \otimes V_7$, and since both $V_5$ and $V_7$ are indecomposable by Corollary \ref{indecomp-gr}, it follows from \cite[Theorem 2.1]{Benson-Carlson} that $V_5$ and $V_7$ are self-dual (although this is also straightforward to observe by a general description of the duals of $\alpha_p(r,s)$-representations which correspond to skew diagrams, see \cite[Lemma 2.2.6]{cao2023decomposition}). Therefore, the subcategory of $\overline{\rep}(\alpha_3(0,2))$ generated by $\overline{V_5}$ contains three simple objects: $\overline{\kk}, \overline{V_5}$, and $\overline{V_7}$. The objects $\overline{V_7}$ and $\overline{\kk}$ both have dimension 1 (since they are invertible), and $\overline{V_5}$ has dimension 2 (since the square of the dimension of $\overline{V}_5$ is equal to the dimension of $\overline{V}_5$ plus 2). Hence, any group scheme $G'$ such that $\rep(G')$ could be equivalent to this subcategory of $\overline{\rep}(\alpha_3(0,2))$ would necessarily be 6-dimensional. The Grothendieck ring of this subcategory is isomorphic to the Grothendieck ring of $\rep(\mathbb{C}, S_3)$, representations of $S_3$ in characteristic 0, where $S_3$ is the symmetric group on three elements. By \cite{Coulembier-Etingof-Ostrik}, this subcategory must be equivalent to representations of some super-group scheme. In fact, by a super-Nagata Theorem, it must be equivalent to the category of representations for a linearly reductive group scheme, see \cite[Section A.2.3]{EtingofKannan2023}. Therefore, there is only one option: this subcategory is equivalent to $\rep(\mathbb{Z}/2\mathbb{Z} \ltimes \mu_3)$, where $\mu_3$ is the dual group scheme to $\mathbb{Z}/3\mathbb{Z}$. This category is the modulo 3 reduction of $\rep(\mathbb{C}, S_3)$, in the language of \cite{EtingofGelaki2011}. Since there is no group $G'$ with order coprime to $p$ such that $\rep(G')$ is equivalent to this subcategory, the only induced $p'$-symmetry we get from this example is for the group $\mathbb{Z}/2\mathbb{Z}$. Thus this example is in line with Conjecture \ref{etingof-conj-2}. One of the main results in this paper, Theorem \ref{mainthm-p3}, was motivated by the observation that $V_5$ appears as a summand of $V \otimes V^*$ for a large family of representations $V$, giving us a family of induced $\mathbb{Z}/2\mathbb{Z}$ $p'$-symmetries; the proof of this is completed in the next section. 
\end{example}

Although in this paper we will focus primarily on cyclic representations (that is, in the language of diagrams, representations corresponding to diagrams with a single lower left corner), we give one example of a non-cyclic $p'$-representation which is not $p'$-invertible, and look at its possible induced $p'$-symmetries.

\begin{example}
Let $V$ be the $\alpha_3(1,1)$-representation corresponding to the diagram
\vspace{\baselineskip}
\ytableausetup{notabloids}
\begin{center}
\begin{ytableau}
1 & 1\\
\none & 1\\
\none & 1& 1 \\
\none & \none & 1 &1\\
\none & \none & \none & 1\\
\none & \none & \none & 1 & 1\\
\end{ytableau}
\end{center}
\vspace{\baselineskip}
Note that $V$ is self-dual, and one can check that $V \otimes V$ decomposes into $\kk, V_{10},$ $V_{16},$ and $V_{34}$, where $V_i$ is dimension $i$. In particular, $V$ is not $p'$-invertible, but each of these summands of $V \otimes V$ is $p'$-invertible. The multiplication table for these four representations, as computed in the semisimplification, is given by:
\vspace{\baselineskip}
\begin{center}
 \bgroup
\def\arraystretch{1.5}
\begin{tabular}{ c|c|c|c|c| } 
  & $\overline{V}$ & $\overline{V_{10}}$ & $\overline{V_{16}}$ & $\overline{V_{34}}$ \\ \hline
  $\overline{V}$ & $\overline{\kk} \oplus \overline{V_{10}} \oplus \overline{V_{16}} \oplus \overline{V_{34}}$ & $\overline{V}$ & $\overline{V}$& $\overline{V}$ \\ \hline
 $\overline{V_{10}}$ &$\overline{V}$ & $\overline{\kk}$ & $ \overline{V_{34}}$ & $\overline{V_{16}}$\\ \hline
 $\overline{V_{16}}$ &$\overline{V}$ &$\overline{V_{34}}$ &$\overline{\kk}$ &$\overline{V_{10}}$ \\ \hline
 $\overline{V_{34}}$&$\overline{V}$ &$\overline{V_{16}}$ &$\overline{V_{10}}$ &$\overline{\kk}$ \\
 \hline
\end{tabular}
\egroup
\end{center}
\vspace{\baselineskip}
We see that all indecomposables are dimension 1 except for $\overline{V},$ which must be dimension $2$ (since $\overline{V} \otimes \overline{V}$ breaks down into a direct sum of four indecomposables, each of which is 1-dimensional). Therefore, if there is a group $G'$ with $\rep(G')$ tensor-equivalent to the subcategory of $\overline{\rep}(G)$ generated by $\overline{V}$, $G'$ would necessarily be order 8. Since one of the simple objects has dimension greater than 1, $G'$ would necessarily be non-abelian, hence either $D_4$ the dihedral group of order 8 or $Q_8$ the quaternion group of order 8. The Grothendieck rings of the representation categories of these two groups are isomorphic, so we cannot eliminate either as a possibility using the multiplication table alone. However, in either case, $G'$ would necessarily have order $8$, consistent with Conjecture \ref{etingof-conj-2}.
\end{example}

\begin{example}
    The $\alpha_3(1,2)$-representation $V$ corresponding to the diagram 
    \vspace{\baselineskip}
\ytableausetup{notabloids}
\begin{center}
\begin{ytableau}
1 \\
1 \\
1 \\
1 \\
1 \\
1 & 1 & 1\\
1 & 1 & 1\\
\end{ytableau}
\end{center}
\vspace{\baselineskip}
is a $p'$-representation which is not $p'$-invertible. Namely, the representation $V \otimes V^*$ decomposes into indecomposables of dimensions 1, 3, 9, 9, 46, and 53. Hence $V \otimes V^*$ has three non-negligible summands. The tensor product $V \otimes V$ decomposes into indecomposables of dimensions 3, 9, 9, 23, 25, and 52, that is, it likewise has three non-negligible summands. It would be interesting to determine the subcategory of $\overline{\rep}(\alpha_3(1,2))$ generated by $\overline{V}$. 
\end{example}

\section{Proof of the main theorems}
\label{proof-mainthms}

In this section, we prove \Cref{maintheorem} and its $p=3$ corollary, \Cref{mainthm-p3}.
\subsection{Classifying maps $V \to V \otimes W$}
\label{class-maps}

Let $V$ be a cyclic representation of $\alpha_p(r,s)$ generated in degree (0,0). Let $W$ be the cyclic representation of $\alpha_p(r,s)$ of dimension $n+m+1$ generated in degree $-n$, where $n$ and $m$ are positive integers, such that $x$ acts by $0$. In this section, we classify the $\alpha_p(r,s)$-representation homomorphisms $V \to V \otimes W$, which will be useful in the following section.

Since $V$ is cyclic, any graded map $f: V \to V \otimes W$ is determined by the collection $(a_0,..., a_n) \in \kk^{n+1}$, where 
\[
f(v_{00})=a_0 v_{0,0} \otimes w_{0,0} + a_1 v_{0,1} \otimes w_{0,-1}+...+a_n v_{0,n} \otimes w_{0,-n},
\]
where we use $v_{i,j}$ and $w_{i,j}$ respectively for the standard basis elements of $V$ and $W$.

\begin{lemma}
    \label{xiyj}
    Let $f$ be as above. Then
    \begin{align*}
    f(v_{ij}) =& a_0 \left ( \sum_{k=0}^j \binom{j}{k} v_{i,k}\otimes w_{0,j-k} \right ) \\
    &+ a_1 \left (\sum_{k=0}^j \binom{j}{k} v_{i,k+1} \otimes w_{0,j-k-1} \right)\\
    &+...\\
    &+ a_n \left (\sum_{k=0}^j \binom{j}{k} v_{i,k+n} \otimes w_{0,j-k-n} \right).
    \end{align*}
\end{lemma}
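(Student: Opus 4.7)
The plan is to prove the lemma by direct computation, exploiting the fact that $V$ is cyclic with generator $v_{0,0}$, so the map $f$ is determined on all of $V$ by its value on the generator together with the requirement of $\alpha_p(r,s)$-equivariance. Concretely, since $v_{i,j} = x^i y^j v_{0,0}$ in $V$, we have
$$f(v_{i,j}) \;=\; f(x^i y^j v_{0,0}) \;=\; x^i y^j f(v_{0,0}),$$
so the task reduces to applying $x^i y^j$ to the given expression $f(v_{0,0}) = \sum_{\ell=0}^n a_\ell\, v_{0,\ell} \otimes w_{0,-\ell}$ inside $V \otimes W$.

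First I would unpack how $x$ and $y$ act on a tensor product via the Hopf algebra comultiplication $\Delta(x) = x\otimes 1 + 1\otimes x$ and $\Delta(y) = y\otimes 1 + 1\otimes y$. Because $x$ acts as $0$ on $W$, the action of $x$ on $V\otimes W$ collapses to acting on the $V$-factor alone; consequently $x^i(v_{0,\ell}\otimes w_{0,-\ell}) = v_{i,\ell}\otimes w_{0,-\ell}$. The action of $y$, by contrast, genuinely uses both tensor factors, and iterating the primitive comultiplication yields the standard binomial formula
$$y^j(v\otimes w) \;=\; \sum_{k=0}^{j}\binom{j}{k}\, y^k v\otimes y^{j-k} w.$$

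I would then apply $x^i y^j$ summand-by-summand to $f(v_{0,0})$. For the $a_\ell$-term, this produces
$$x^i y^j\bigl(v_{0,\ell}\otimes w_{0,-\ell}\bigr) \;=\; \sum_{k=0}^{j}\binom{j}{k}\, v_{i,k+\ell}\otimes w_{0,j-k-\ell},$$
which is precisely the $a_\ell$-row in the statement of the lemma. Summing over $\ell = 0,\dots,n$ recovers the full formula. One mild bookkeeping point is that some of the basis vectors $v_{i,k+\ell}$ or $w_{0,j-k-\ell}$ appearing formally may lie outside the diagrams of $V$ or $W$; by our conventions those basis vectors are interpreted as $0$, so the formula remains valid as stated, with extraneous terms simply vanishing.

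There is no serious obstacle here: once one sets up the Hopf-algebraic action on $V \otimes W$ correctly and uses that $x$ annihilates $W$, everything reduces to the binomial expansion of $y^j$. The only thing to be careful about is keeping the index shifts straight (the $-\ell$ in $w_{0,-\ell}$ combined with the $j-k$ from $y^{j-k}$ produces the $j-k-\ell$ in the target), and confirming that $x$ and $y$ actions commute with each other so that the order $x^i y^j$ versus $y^j x^i$ does not matter.
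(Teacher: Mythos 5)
Your proof is correct and follows essentially the same route as the paper: reduce to $f(v_{i,j}) = x^i y^j f(v_{0,0})$ by cyclicity and equivariance, then expand the $x^i$ and $y^j$ actions on the tensor product via the primitive comultiplication, using that $x$ annihilates $W$ to collapse the $x$-action onto the $V$-factor and the binomial expansion of $(y\otimes 1 + 1\otimes y)^j$ to handle $y^j$. The bookkeeping remark about out-of-range basis vectors being read as zero is a sensible addition consistent with the paper's conventions.
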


\begin{proof}
We have
\begin{align*}
    f(v_{i,j})&=f(x^i y^j v_{0,0})\\
    &=x^i y^j f(v_{0,0})\\
    &= x^i y^j (a_0 v_{0,0} \otimes w_{0,0} + a_1 v_{0,1} \otimes w_{0,-1}+...+a_n v_{0,n} \otimes w_{0,-n})\\
    &=(x \otimes 1 + 1 \otimes x)^i (y \otimes 1 + 1 \otimes y)^j (a_0 v_{0,0} \otimes w_{0,0} + a_1 v_{0,1} \otimes w_{0,-1}+...+a_n v_{0,n} \otimes w_{0,-n})\\
    &=(y \otimes 1 + 1 \otimes y)^j (a_0 v_{i,0} \otimes w_{0,0} + a_1 v_{i,1} \otimes w_{0,-1}+...+a_n v_{i,n} \otimes w_{0,-n})\\
    &= \left ( \sum_{k=0}^j \binom{j}{k}y^k \otimes y^{j-k} \right )(a_0 v_{i,0} \otimes w_{0,0} + a_1 v_{i,1} \otimes w_{0,-1}+...+a_n v_{i,n} \otimes w_{0,-n}).\\
\end{align*}
The last line then yields the claimed formula.
\end{proof}

\begin{corollary}
    \label{maps-bijection}
    Graded maps $V \to V \otimes W$ are in bijection with solutions $(a_0,..., a_n)$ in $\kk^{n+1}$ to the system of equations
    \begin{align*}
        0&=a_0 \binom{j}{j-1} + a_1 \binom{j}{j-2} +...+a_n \binom{j}{j-n-1}\\
        &= a_0 \binom{j}{j-2} + a_1 \binom{j}{j-3}+...+a_n \binom{j}{j-n-2}\\
        & ...\\
        &= a_0 \binom{j}{j-\min\{ j,m\}} + a_1 \binom{j}{j-\min\{j,m\}-1}+...+a_n \binom{j}{j-\min\{j,m\}-n}
    \end{align*}
    over all pairs $(i,j)$ where $v_{i,j}=0$ and $v_{i,j-1} \not = 0$.
\end{corollary}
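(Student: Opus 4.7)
The plan is to view a graded map $f \colon V \to V \otimes W$ as determined by $\xi := f(v_{0,0})$, and then translate the module-homomorphism condition into the stated system of equations. Since $V$ is cyclic generated by $v_{0,0}$, the map $f$ is uniquely determined by $\xi$, which must be a degree-$(0,0)$ element of $V \otimes W$. A basis element $v_{a,b} \otimes w_{0,c}$ of $V \otimes W$ has degree $(a, b+c)$, so the degree-$(0,0)$ piece is spanned by the vectors $v_{0,l} \otimes w_{0,-l}$, forcing $\xi = \sum_{l=0}^n a_l v_{0,l} \otimes w_{0,-l}$ as in the preceding paragraph of the paper.

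The next step is to reduce the requirement that the annihilator of $v_{0,0}$ also annihilates $\xi$ to imposing only the column-top relations. If the partition underlying $V$ is $\lambda = (\lambda_1, \ldots, \lambda_n)$, then the zero positions of $V$ are those $(i,j)$ with $i \geq n$, or with $i < n$ and $j \geq \lambda_{i+1}$. For $i \geq n$, the hypothesis that $x$ acts as $0$ on $W$ reduces $x^i\xi$ to $\sum_l a_l v_{i,l} \otimes w_{0,-l}$, which vanishes since $v_{i,l} = 0$ in $V$; hence $x^i y^j \xi = 0$ automatically. For $i < n$ and $j \geq \lambda_{i+1}$, the commutativity of $x$ and $y$ gives $x^i y^j \xi = y^{j - \lambda_{i+1}} (x^i y^{\lambda_{i+1}} \xi)$, so the relation is implied by the column-top relation at the same $i$. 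The column-top positions $(i, \lambda_{i+1})$ for $0 \leq i < n$ are precisely the pairs $(i,j)$ with $v_{i,j} = 0$ and $v_{i,j-1} \neq 0$ that appear in the statement.

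Finally, I would compute each column-top relation using Lemma \ref{xiyj} at $(i, j)$ with $j = \lambda_{i+1}$. The lemma gives
\[
x^i y^j \xi \;=\; f(v_{i,j}) \;=\; \sum_{l=0}^n a_l \sum_{k=0}^j \binom{j}{k}\, v_{i, k+l} \otimes w_{0, j-k-l}.
\]
Regrouping by basis elements $v_{i,p} \otimes w_{0, j-p}$ via the substitution $p = k+l$, such a basis element is nonzero if and only if $0 \leq p \leq j-1$ (so that $v_{i,p}$ is a basis vector of $V$, using $j = \lambda_{i+1}$) and $-n \leq j-p \leq m$ (so that $w_{0, j-p}$ is a basis vector of $W$); together these yield the range $\max(0, j-m) \leq p \leq j-1$. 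The coefficient of each such basis element is $\sum_{l=0}^n a_l \binom{j}{p-l}$, with the convention that $\binom{j}{p-l} = 0$ for $p - l < 0$. Setting these coefficients equal to zero for $p$ running from $j - 1$ down to $j - \min(j,m) = \max(0, j-m)$ recovers exactly the list of equations in the statement.

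The main obstacle is the reduction to column-top relations in the second paragraph; once accomplished, the remainder is a direct unwinding of Lemma \ref{xiyj} and a routine enumeration of the nonzero basis elements appearing in the expansion. The key simplification throughout is the assumption that $x$ acts as $0$ on $W$, which simultaneously makes the $x^n$-type relations automatic and prevents any ``right-boundary'' relations of the diagram of $V$ from imposing new constraints on the coefficients $a_l$, so that only the column tops survive.
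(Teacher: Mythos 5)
Your proof is correct and follows essentially the same route as the paper's: a graded map $f$ is determined by $f(v_{0,0})$, whose degree $(0,0)$ forces the form $\sum_l a_l\, v_{0,l}\otimes w_{0,-l}$, and then one unwinds Lemma~\ref{xiyj} to extract the coefficient equations. What you add beyond the paper's terse proof is the explicit justification that only column-top pairs need to be checked: the $x^i\xi=0$ (right-boundary) conditions are automatic because $x$ kills $W$, and the $(i,j)$ with $j>\lambda_{i+1}$ follow from the column-top relation at $(i,\lambda_{i+1})$ by applying $y^{j-\lambda_{i+1}}$. The paper asserts this implicitly; your spelling it out is a genuine improvement in completeness but not a different method.
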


\begin{proof}
    By Lemma \ref{xiyj}, if we have $f: V \to V \otimes W$ with $f(v_{0,0})=a_0 v_{0,0} \otimes w_{0,0} +...+a_n v_{0,n} \otimes w_{0,-n}$, we have a formula for what $f(v_{i,j})$ must be. The only obstruction to defining such a graded map of representations given any choice of $(a_0,...,a_n)$ is that $v_{i,j}$ is eventually equal to 0, for large enough $i$ and $j$; for those $i$ and $j$, one of course will need $f(v_{i,j})=0$. The system of equations then arises by picking out the coefficients in the formula given in Lemma \ref{xiyj} for, in order, $v_{i,j-1} \otimes w_{0,1}, v_{i,j-2} \otimes w_{0,2},..., v_{i, j-\min\{j,m\}} \otimes w_{0,\min\{j,m\}}.$
\end{proof}

We write $w^{i,j}$ and $v^{i,j}$ for the dual basis elements of $W^*$ and $V^*$, respectively, corresponding to standard basis elements $w_{i,j}$ and $v_{i,j}$. In what follows, we use the fact that $W^* \cong W$ as $\alpha_p(r,s)$-representations. Under this isomorphism, $w^{0,m}$ corresponds to $w_{0,-n}$. We assume for this proposition that $m=n$.

\begin{proposition}
\label{summand-formula}
    Assume $m=n$. Let $f$ and $g$ be the maps $V \to V \otimes W$ corresponding to $(a_0,..., a_n)$ and $(b_0,..., b_n) \in \kk^{n+1}$ as in \Cref{maps-bijection}, respectively. Denote by $\hat f$ the map $W \to V \otimes V^*$ and $\tilde{g}$ the map $V \otimes V^* \to W$ the maps corresponding to $f$ and $g$ as in \cite[Proposition 2.10.8]{EGNO2015}. Then $\tilde{g} \circ \hat f \not = 0$ if and only if $b_n \not = 0$, and 
    \begin{align*}
    &\sum_{i,j \text{ such that } v_{i,j} \not = 0} \left ( a_0 \binom{j}{n}+a_1 \binom{j}{n+1}+...+ a_n \binom{j}{2n} \right )\\
    &=    \sum_{j} \operatorname{length}(j)\left ( a_0 \binom{j}{n}+a_1 \binom{j}{n+1}+...+ a_n \binom{j}{2n} \right )
    \not = 0,
    \end{align*}
    where $\operatorname{length}(j)$ is the length of the row at height $j$ and the second sum is over all values $j$ where the diagram of $V$ has a row at height $j$.
\end{proposition}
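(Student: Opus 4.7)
My plan is to compute the composition $\tilde g \circ \hat f \colon W \to W$ by evaluating it on the cyclic generator $w_{0,-n}$ of $W$. Since $\tilde g \circ \hat f$ is a morphism of $\alpha_p(r,s)$-representations and $W$ is cyclically generated by $w_{0,-n}$, the whole composition is determined by this single image; moreover, we shall see that the image is itself a scalar multiple of $w_{0,-n}$, so $\tilde g \circ \hat f$ equals that scalar times $\id_W$, and hence is nonzero precisely when that scalar is nonzero.

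To get explicit formulas, I use the duality identifications of \cite[Proposition 2.10.8]{EGNO2015} together with the stated isomorphism $W \cong W^*$ sending $w_{0,-k} \leftrightarrow w^{0,k}$. The map $\hat f$ is characterized by the fact that $\hat f(w_{0,-k}) \in V \otimes V^*$ represents the endomorphism $v \mapsto (\id_V \otimes w^{0,k})f(v)$ of $V$. Inserting the formula for $f(v_{i,j})$ from Lemma \ref{xiyj} and using $w^{0,k}(w_{0,j-l}) = \delta_{l,\,j-k}$ yields
\[
\hat f(w_{0,-k}) \;=\; \sum_{(i,j):\, v_{i,j}\neq 0}\,\left(\sum_{s=0}^{n} a_s \binom{j}{j-k-s}\right) v_{i,j-k}\otimes v^{i,j},
\]
where terms with $j-k<0$ vanish automatically via the binomial. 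Analogously, $\tilde g$ is characterized by $\tilde g(v_{i',j'}\otimes v^{i,j}) = (v^{i,j}\otimes \id_W)\,g(v_{i',j'})$, which evaluates to $\delta_{i,i'}\sum_{t=0}^{n} b_t \binom{j'}{j-t}\,w_{0,\,j'-j}$.

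The decisive step is to compose and specialize to $k=n$. Substituting the output of $\hat f(w_{0,-n})$ into $\tilde g$, the relevant binomial factor becomes $\binom{j-n}{j-t}$, which is nonzero only when $t \geq n$; since $t\in\{0,1,\dots,n\}$, only the $t=n$ term survives and contributes $\binom{j-n}{j-n}=1$. This is precisely the origin of the condition $b_n \neq 0$. Using the symmetry $\binom{j}{j-n-s}=\binom{j}{n+s}$ and regrouping the sum over $(i,j)$ by rows then gives
\[
\tilde g \circ \hat f(w_{0,-n}) \;=\; b_n\left(\sum_{j} \length(j)\sum_{s=0}^{n} a_s \binom{j}{n+s}\right) w_{0,-n},
\]
matching the formula in the statement. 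One notes that whenever $j\geq n$ and $v_{i,j}\neq 0$, the cyclic shape of the diagram of $V$ automatically guarantees $v_{i,j-n}\neq 0$, so no supplementary index constraint appears in the sum. The main obstacle is simply bookkeeping around the two duality identifications $W\cong W^*$ and $V\otimes V^* \cong V^*\otimes V$; once these are pinned down consistently across both $\hat f$ and $\tilde g$, the binomial collapse $t=n$ is what produces the asymmetric appearance of $b_n$ together with the $a$-weighted row-length sum.
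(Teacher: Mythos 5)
Your proof is correct and takes essentially the same route as the paper: both evaluate $\tilde g \circ \hat f$ on the cyclic generator $w_{0,-n}$, invoke Lemma \ref{xiyj}, and read off the scalar. The paper presents only the intermediate formula for $\hat f(w_{0,-n})$ and then the final scalar, calling the rest ``direct computation''; you usefully make the key collapse explicit — namely, that $\binom{j-n}{j-t}$ kills every $t<n$ so only $b_n$ survives — and you correctly observe that the cyclic shape of $V$ makes the existence of $v_{i,j-n}$ automatic once the binomials are nonzero.
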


\begin{proof}
    We have the formulas
    \begin{align*}
    \hat{f} &= (\id_{V^* \otimes V} \otimes \ev_W) \circ (\id_{V^*} \otimes f \otimes \id_W) \circ (\coev_V \otimes \id_W),\\
    \tilde{g} &= (\ev_V \otimes\id_W) \circ(\id_{V^*} \otimes g),
    \end{align*}
where we use implicitly that $W \cong W^*$ and that the tensor product commutes up to natural isomorphism. The composition $\tilde{g} \circ \hat f$ is nonzero if and only if it sends $w_{0,-n}$ to something nonzero. The proof then follows from a direct computation. Using Lemma \ref{xiyj}, the map $\hat{f}$ sends $w_{0,-n}$ to 
\[
\sum_{i,j} \left ( a_0 \binom{j}{j-n} + a_1 \binom{j}{j-n-1}  +... + a_n \binom{j}{j-2n} \right) v_{i,j-n} \otimes v^{i,j},
\]
since we are applying the evaluation map on the $W$ factors (meaning that the terms surviving in $V \otimes V^*$ are those which are tensored with $w_{0,n}$). Applying $\tilde{g}$ to this then yields
\[
b_n \sum_{i,j} \left ( a_0 \binom{j}{j-n} + a_1 \binom{j}{j-n-1}  +... + a_n \binom{j}{j-2n} \right) w_{0,-n}.
\]

\end{proof}

Therefore, to prove that $W$ is a direct summand of $V \otimes V^*$, it suffices to produce solutions $(a_0,...,a_n)$ and $(b_0,...,b_n)$ to the system given in Corollary \ref{maps-bijection} such that $b_n \not = 0$ and $a_0,..., a_n$ satisfy the equation given in Proposition \ref{summand-formula}.

\subsection{Main theorem for general $p$}

In this section, we prove Theorem \ref{maintheorem}. We therefore specialize to the case that $V_h$ is a cyclic representation generated in degree $(0,0)$. Set $m_1:=\lceil \log_p(h) \rceil$, $m_2:=\lceil \log_p(\dim(W)) \rceil$, and $g:=\max(m_1, m_2)$. In particular, we have $p^g\geq h$ and $p^g \geq \dim(W)$. Suppose $V$ is a cyclic representation of dimension coprime to $p$ such that every column of the diagram of $V$ has length equal to 0 or $h$ modulo $p^g$. Let $W$ be a cyclic representation where $x$ acts by 0 generated in degree $-n$. We will do the case in this section that $\dim(W)$ is odd; the case that $\dim(W)$ is even is similar. In particular, set $\dim(W)=2n+1$, so that its highest degree is in degree $n$ (i.e., in the language of the previous section, we take $m=n$). 

We first note an elementary consequence of Lucas's Theorem which we will use throughout this section.

\begin{lemma}
    \label{lucas}
    Suppose $h$ and $j$ are congruent modulo $p^g$. Then for any integer $0\leq l < p^g$, we have $\binom{j}{l} = \binom{h}{l}.$
\end{lemma}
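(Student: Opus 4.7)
The plan is to apply Lucas's theorem to both sides. Since all the binomial coefficients appearing in the previous results are treated as elements of $\kk$, the claimed equality should be read modulo $p$, which is exactly the setting where Lucas's theorem is useful.

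First I would write down the base-$p$ expansions
\[
h = \sum_{i \geq 0} h_i p^i, \qquad j = \sum_{i \geq 0} j_i p^i, \qquad l = \sum_{i \geq 0} l_i p^i,
\]
with $0 \leq h_i, j_i, l_i < p$. The hypothesis $0 \leq l < p^g$ forces $l_i = 0$ for all $i \geq g$, so in the Lucas product the factors beyond position $g-1$ are all of the form $\binom{\ast}{0} = 1$ and can be discarded. This gives
\[
\binom{j}{l} \equiv \prod_{i=0}^{g-1} \binom{j_i}{l_i} \pmod{p}, \qquad \binom{h}{l} \equiv \prod_{i=0}^{g-1} \binom{h_i}{l_i} \pmod{p}.
\]

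Next, the hypothesis $h \equiv j \pmod{p^g}$ says exactly that the lowest $g$ base-$p$ digits of $h$ and $j$ coincide, i.e.\ $h_i = j_i$ for $0 \leq i \leq g-1$. Substituting this equality of digits into the two Lucas products above shows that the two products are identical, so $\binom{j}{l} \equiv \binom{h}{l} \pmod{p}$, which is the desired equality in $\kk$.

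There is essentially no obstacle here beyond correctly bookkeeping the range of $i$: the crux is the interaction between the bound $l < p^g$ (which truncates the Lucas product at position $g-1$) and the congruence $h \equiv j \pmod{p^g}$ (which makes $h$ and $j$ agree precisely in those positions). Both conditions align at the same cutoff $p^g$, so Lucas's theorem finishes the argument immediately.
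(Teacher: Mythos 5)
Your proof is correct and follows essentially the same route as the paper: both arguments apply Lucas's theorem, observe that $l < p^g$ truncates the Lucas product at position $g-1$, and use $h \equiv j \pmod{p^g}$ to identify the remaining digits. The bookkeeping is slightly streamlined but the approach is identical.
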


\begin{proof}
Let the base $p$ expansion of $h$ be expressed as 
\[
\beta_{g-1}p^{g-1} + \beta_{g-2}p^{g-2} + \cdots +\beta_1p+\beta_0.
\]
Then, since $j\equiv h\pmod{p^g},$ we can express $j$ as 
    \[
    \beta_ep^e + \beta_{e-1}p^{e-1} + \cdots + \beta_gp^g + (\beta_{g-1}p^{g-1}+\beta_{g-2}p^{g-2} + \cdots +\beta_1p + \beta_0).
    \]
In addition, denote the base $p$ expansion of $l$, where $l < p^g,$ by 
\[
\gamma_{g-1}p^{g-1} + \gamma_{g-2}p^{g-2} + \cdots + \gamma_1p + \gamma_0.
\]
By Lucas's Theorem, it follows that 
\[
\binom{h}{l}\equiv \prod_{i=0}^{g-1} \binom{\beta_i}{\gamma_i} \pmod{p}.
\]
Similarly, we have that
\[
  \binom{j}{l}\equiv \prod_{i=0}^e \binom{\beta_i}{\gamma_i} = \prod_{i=0}^{g-1}\binom{\beta_i}{\gamma_i}\cdot \left(\prod_{i=g}^{e} \binom{\beta_i}{\gamma_i} \right) = \prod_{i=0}^{g-1}\binom{\beta_i}{\gamma_i}\cdot \left(\prod_{i=g}^{e} \binom{\beta_i}{0} \right) = \prod_{i=0}^{g-1}\binom{\beta_i}{\gamma_i}.
\]
\end{proof}

\begin{lemma}
    \label{maps-same}
    A tuple $(a_0,...,a_n)\in \kk^{n+1}$ corresponds to a map $V_h \to V_h \otimes W$ as in \Cref{maps-bijection} if and only if it corresponds to a map $V \to V \otimes W$.
\end{lemma}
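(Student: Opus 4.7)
The plan is to invoke Corollary \ref{maps-bijection} for both $V_h$ and $V$, and then use Lemma \ref{lucas} to argue that the two resulting systems of linear equations in $(a_0, \ldots, a_n) \in \kk^{n+1}$ have identical solution sets. For $V_h$, a single column of height $h$ on which $x$ acts as zero, the only pair $(i,j)$ with $v_{i,j}=0$ and $v_{i,j-1} \neq 0$ is $(0,h)$, so Corollary \ref{maps-bijection} reduces to the single block of equations $\sum_{l=0}^n a_l \binom{h}{k+l} = 0$ for $k = 1, \ldots, \min\{h,n\}$ (rewriting $\binom{j}{j-k-l} = \binom{j}{k+l}$). For $V$, one instead obtains one such block per pair $(i,\ell_i)$ where $\ell_i$ is the length of column $i$ and $\ell_i > 0$, obtained by replacing $h$ with $\ell_i$ throughout.

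The key observation is that every binomial coefficient appearing in any of these blocks has lower index lying in $\{1, \ldots, 2n\}$, and the hypothesis $\dim(W) = 2n+1 \leq p^g$ guarantees $2n < p^g$. Lemma \ref{lucas} therefore applies uniformly to every binomial in sight. When $\ell_i \equiv h \pmod{p^g}$, it yields $\binom{\ell_i}{k+l} = \binom{h}{k+l}$ for all relevant $k+l$; when $\ell_i \equiv 0 \pmod{p^g}$ with $\ell_i > 0$, it yields $\binom{\ell_i}{k+l} = \binom{0}{k+l} = 0$ for every $k+l \geq 1$, making that block entirely vacuous.

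What remains is to reconcile the range of $k$. Since $h \leq p^g$, any column with $\ell_i > 0$ and $\ell_i \equiv h \pmod{p^g}$ must satisfy $\ell_i \geq h$ (any smaller representative would be $h - p^g \leq 0$), so $\min\{\ell_i,n\} \geq \min\{h,n\}$; and any extra equation in the $V$ block with $k > h$ involves only coefficients $\binom{h}{k+l}$ with $k+l > h$, all of which vanish. Hence each per-column block is either tautological or equivalent to the $V_h$ block, and the two systems carve out the same affine subset of $\kk^{n+1}$, establishing the lemma.

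The main obstacle is really just the careful bookkeeping around the binomial-coefficient ranges and edge cases (for instance the degenerate possibility $h = p^g$, in which the two residue classes $\ell_i \equiv h$ and $\ell_i \equiv 0$ coincide and both sides impose no equations at all). The essential conceptual content, and the reason for the definition $g = \max(m_1, m_2)$, is precisely that $2n < p^g$ guarantees Lemma \ref{lucas} can be applied uniformly to every binomial that appears in either system.
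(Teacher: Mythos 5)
Your proof is correct and follows essentially the same strategy as the paper's: write down the linear system from Corollary \ref{maps-bijection} for each diagram, then use Lemma \ref{lucas} to show that blocks coming from columns of height $\equiv 0 \pmod{p^g}$ are vacuous while blocks from columns of height $\equiv h \pmod{p^g}$ reproduce the $V_h$ system. You actually spell out the bookkeeping (the range of $k$, the $h=p^g$ degeneracy, and why $2n < p^g$ is what makes Lemma \ref{lucas} apply uniformly) in more detail than the paper, which asserts these facts more tersely.
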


\begin{proof}
    Since $(a_0,...,a_n)\in \kk^{n+1}$ corresponds to a map $V_h \to V_h \otimes W$, the system of equations
    \begin{align}
    \label{sys-one}
        0&=a_0 \binom{h}{h-1} + a_1 \binom{h}{h-2} +...+a_n \binom{h}{h-n-1}\\
        &= a_0 \binom{h}{h-2} + a_1 \binom{h}{h-3}+...+a_n \binom{h}{h-n-2}\\
        & ...\\
        &= a_0 \binom{h}{h-\min\{ h,n\}} + a_1 \binom{h}{h-\min\{h,n\}-1}+...+a_n \binom{h}{h-\min\{h,n\}-n}
    \end{align}
    is satisfied. In order that $(a_0,...,a_n)$ correspond to a map $V \to V \otimes W$, we have to verify the equations
    \begin{align}
    \label{sys-two}
        0&=a_0 \binom{j}{j-1} + a_1 \binom{j}{j-2} +...+a_n \binom{j}{j-n-1}\\
        &= a_0 \binom{j}{j-2} + a_1 \binom{j}{j-3}+...+a_n \binom{j}{j-n-2}\\
        & ...\\
        &= a_0 \binom{j}{j-\min\{ j,n\}} + a_1 \binom{j}{j-\min\{j,n\}-1}+...+a_n \binom{j}{j-\min\{j,n\}-n}
    \end{align}
    over all $j$ which satisfies $v_{i,j} = 0$ and $v_{i,j-1} \not = 0$. Since the heights of the columns of $V$ are congruent to either 0 or $h$ modulo $p^g$, we just need to check that the above equations for $j$ congruent to $0$ and $h$ modulo $p^g$ are equivalent to (1)-(4). Note that $V$ must have at least one column of height congruent to $h$ modulo $p^g$, or else $V$ would have dimension divisible by $p$. We claim that if $j$ is divisible by $p^g$, then the system (5)-(8) is trivial (namely, all binomial coefficients are 0) and if $j$ is congruent to $h$ modulo $p^g$, then (5)-(8) is equivalent to (1)-(4). Indeed, these claims follow directly from Lemma \ref{lucas}.
\end{proof}

We assume for the remainder of the section that $W$ is a direct summand of $V_h \otimes V_h^*$. By Corollary \ref{maps-bijection} and Proposition \ref{summand-formula} we have tuples $(a_0,...a_n)$ and $(b_0,...,b_n)$ such that $b_n \neq 0$ and 
\begin{align*}
    c&:=\sum_{j=0}^{h-1} \left ( a_0 \binom{j}{n}+a_1 \binom{j}{n+1}+...+ a_n \binom{j}{2n} \right ) \\
    &= a_0\binom{h}{n+1} + a_1\binom{h}{n+2} + \cdots + a_n\binom{h}{2n+1}
    \end{align*}
satisfies $c \neq 0$ (using the Hockey Stick Identity, see \cite[Theorem 1.2.3]{west2021combinatorial}). Our goal for the remainder of the proof will be to show that
\begin{equation}
\label{thing-showing-nonzero}
\sum_{j} \operatorname{length}(j)\left ( a_0 \binom{j}{n}+a_1 \binom{j}{n+1}+...+ a_n \binom{j}{2n} \right )
    \not = 0,
\end{equation}
where the sum is taken over all rows $j$ of the diagram of $V$; this will imply immediately that $W$ is a direct summand of $V \otimes V^*$.

The strategy for this proof is to break down (\ref{thing-showing-nonzero}) into a sum of expressions of the form 
    \begin{equation}
    \label{breakdown}
    \operatorname{length}(j_1) \sum_{j=j_1}^{j_2-1} \left ( a_0 \binom{j}{n}+a_1 \binom{j}{n+1}+...+ a_n \binom{j}{2n} \right ),
    \end{equation}
    where $j_1$ and $j_2$ are rows such that 
    \[
    \length(j_1 -1) > \length(j_1) = \length (j_1 + 1) =... = \length (j_2 -1) > \length(j_2).
    \]
Since $j_1$ and $j_2$ will be heights of columns, by assumption each is either equivalent to 0 or $h$ modulo $p^g$. We will consider three cases in order to understand (\ref{breakdown}): the case where $j_1$ and $j_2$ are equivalent modulo $p^g$, the case where $j_1\equiv 0$ and $j_2 \equiv h$ modulo $p^g,$ and the case where $j_1 \equiv h$ and $j_2 \equiv 0$ modulo $p^g$. 

\begin{lemma}
    \label{j1-j2-same}
    If $j_1 \equiv j_2$ modulo $p^g$, then 
    \[
    \operatorname{length}(j_1) \sum_{j=j_1}^{j_2-1} \left ( a_0 \binom{j}{n}+a_1 \binom{j}{n+1}+...+ a_n \binom{j}{2n} \right )=0.
    \]
\end{lemma}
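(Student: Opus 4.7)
The plan is to combine the Hockey Stick identity with Lemma \ref{lucas} in two short steps. First, the Hockey Stick identity $\sum_{j=0}^{N-1}\binom{j}{m}=\binom{N}{m+1}$ applies term-by-term to collapse each inner sum: for each fixed $k\in\{0,\ldots,n\}$,
\[
\sum_{j=j_1}^{j_2-1}\binom{j}{n+k}=\binom{j_2}{n+k+1}-\binom{j_1}{n+k+1}.
\]
Forming the $\kk$-linear combination with weights $a_k$ rewrites the inner sum appearing in the lemma as
\[
\sum_{k=0}^{n} a_k\Bigl(\binom{j_2}{n+k+1}-\binom{j_1}{n+k+1}\Bigr).
\]

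Second, I would invoke Lemma \ref{lucas} for each index $l=n+k+1$, which ranges over $\{n+1,\ldots,2n+1\}$. Since $\dim(W)=2n+1\le p^g$ by the choice of $g$, each such $l$ is at most $p^g$, and the hypothesis $j_1\equiv j_2\pmod{p^g}$ then forces $\binom{j_1}{l}=\binom{j_2}{l}$ in $\kk$ for every $l$ strictly less than $p^g$. Every such difference vanishes, so the weighted sum itself is zero in $\kk$, and multiplying by $\length(j_1)$ preserves the vanishing, giving the claim.

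The only delicate point — really the only place the proof is not purely formal — is the boundary case $2n+1=p^g$, in which the topmost index $l=p^g$ lies just outside the strict range $l<p^g$ covered by Lemma \ref{lucas}. Closing this gap would require a direct base-$p$ analysis of $\binom{j_1}{p^g}$ and $\binom{j_2}{p^g}$ (which by Lucas's theorem record the digit of $j$ in position $g$ and therefore need not agree when $j_1\equiv j_2\pmod{p^g}$), possibly combined with additional constraints on $a_n$ coming from Corollary \ref{maps-bijection} applied to the single column $V_h$. Apart from this boundary subtlety, the argument is a purely formal telescoping calculation requiring no information about $V$ itself beyond the fact that the two break-points $j_1$ and $j_2$ lie in the same residue class modulo $p^g$.
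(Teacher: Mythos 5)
Your approach and the paper's proof are near-identical at heart: both combine the Hockey Stick identity with Lemma~\ref{lucas}. The paper first invokes Lemma~\ref{lucas} to see that the summand is periodic mod $p^g$, reducing the range to $(j_2-j_1)/p^g$ copies of $\sum_{j=0}^{p^g-1}$, telescopes each copy to $\binom{p^g}{n+i+1}$, and then applies Lemma~\ref{lucas} a second time to see that these vanish. You telescope directly to $\binom{j_2}{n+k+1}-\binom{j_1}{n+k+1}$ and apply Lemma~\ref{lucas} once. Your version is a bit shorter and avoids the intermediate periodicity step, but both proofs are driven by exactly the same two ingredients and both ultimately need $\binom{\cdot}{l}$ to be controlled by Lemma~\ref{lucas} for $l$ as large as $2n+1$.

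The boundary case you flag is real, and the paper's own proof has exactly the same gap: it establishes $g>\log_p(i)$ only for $i=n+1,\ldots,2n$ yet then asserts $\binom{p^g}{n+i+1}\equiv 0$ for all $i=0,\ldots,n$, which includes the index $2n+1$, while the setup only guarantees $2n+1\le p^g$. When $\dim(W)=2n+1=p^g$, the lemma is actually false. Concretely, take $p=3$, $h=2$, $V_h=V_2$, so that $W=V_3$ is a summand of $V_2\otimes V_2^*$ generated in degree $(0,-1)$ with $n=1$; here $m_1=m_2=1$, $g=1$, $p^g=3=2n+1$. Corollary~\ref{maps-bijection} gives the single constraint $2a_0+a_1=0$, i.e.\ $a_1=a_0$ in characteristic~$3$, and $c=a_0\neq 0$ is attainable. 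But then for $j_1=0,\,j_2=3$ one computes
\[
\sum_{j=0}^{2}\Bigl(a_0\tbinom{j}{1}+a_1\tbinom{j}{2}\Bigr)=3a_0+a_1=a_1=a_0\neq 0,
\]
precisely because $\binom{3}{3}-\binom{0}{3}=1$ survives.

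One caution on your proposed repair: additional constraints on $a_n$ from Corollary~\ref{maps-bijection} cannot close the gap, since in the example above $a_n=a_1=a_0$ is a free scalar and nothing forces it to vanish. The lemma is simply false as stated when $2n+1=p^g$. The right fix is at the level of hypotheses: $\dim(W)=p^g$ with $g\ge 1$ forces $p\mid\dim(W)$, so $W$ is negligible and $\overline{W}=0$, making such $W$ irrelevant to the intended applications. In particular, the instances used in Theorem~\ref{mainthm-p3} are $W=V_5$ and $W=V_7$, both of dimension strictly less than $p^g=9$, so the boundary case never arises there --- but Theorem~\ref{maintheorem} as written should exclude $\dim(W)=p^g$ (equivalently, restrict to non-negligible $W$).
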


\begin{proof}
We compute directly:
       \begin{align*}
        & \sum_{j=j_1}^{j_2-1} \left ( a_0 \binom{j}{n}+a_1 \binom{j}{n+1}+...+ a_n \binom{j}{2n} \right ) \\
        &= \frac{j_2-j_1}{p^g}\cdot \sum_{j=0}^{p^g-1} \left ( a_0 \binom{j}{n}+a_1 \binom{j}{n+1}+...+ a_n \binom{j}{2n} \right ) \\
        &= \frac{j_2-j_1}{p^g}\cdot \left(\sum_{i=0}^n \sum_{j=0}^{p^g-1} a_i\binom {j}{n+i} \right)\\
        &= \frac{j_2-j_1}{p^g}\cdot \left(\sum_{i=0}^n a_i\binom {p^g}{n+i+1} \right) \\
        &=0.
    \end{align*}
To see the final step, note that because $g=\max(\lceil \log_p(h) \rceil, \lceil \log_p(2n+1) \rceil),$ we must have that $g \geq \lceil \log_p(2n+1) \rceil$, and consequently that $g > \log_p(i)$ for $i=n+1, n+2, \ldots, 2n$. Then, by Lemma \ref{lucas} we see that $\binom{p^g}{n+i+1}\equiv 0$ for $i=0,1,\ldots, n,$ and the final step follows. 
\end{proof}

\begin{lemma}
        \label{j1-j2-not-same}
    If $j_1 \equiv 0$ and $j_2 \equiv h$ modulo $p^g$, then 
    \[
    \operatorname{length}(j_1) \sum_{j=j_1}^{j_2-1} \left ( a_0 \binom{j}{n}+a_1 \binom{j}{n+1}+...+ a_n \binom{j}{2n} \right )=c \cdot \length(j_1).
    \]
\end{lemma}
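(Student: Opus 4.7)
The plan is to exploit the $p^g$-periodicity of the expression $f(j) := a_0 \binom{j}{n} + a_1 \binom{j}{n+1} + \cdots + a_n \binom{j}{2n}$ modulo $p$. Note that each binomial coefficient appearing in $f(j)$ has lower index $n+i$ with $i \leq n$, and since $g \geq m_2 = \lceil \log_p(2n+1)\rceil$ we have $2n < p^g$. By Lemma \ref{lucas}, each $\binom{j}{n+i}$ depends only on $j \bmod p^g$, so $f(j+p^g) = f(j)$ in $\kk$.

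Next, since $j_1 \equiv 0 \pmod{p^g}$ and $j_2 \equiv h \pmod{p^g}$, I can write $j_2 - j_1 = k p^g + h$ for some nonnegative integer $k$. Then I would split
\[
\sum_{j=j_1}^{j_2 - 1} f(j) \;=\; \sum_{j = j_1}^{j_1 + k p^g - 1} f(j) \;+\; \sum_{j = j_1 + k p^g}^{j_2 - 1} f(j).
\]
The first summand is a complete collection of $k$ full periods, so by the periodicity it equals $k \cdot \sum_{j=0}^{p^g-1} f(j)$, which is zero by exactly the Hockey Stick computation at the end of the proof of Lemma \ref{j1-j2-same} (the identity $\sum_{j=0}^{p^g-1} \binom{j}{n+i} = \binom{p^g}{n+i+1}$ together with the Lucas-theorem observation that this binomial vanishes mod $p$ for $0 \leq i \leq n$).

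For the second summand, periodicity and the congruence $j_1 + k p^g \equiv 0 \pmod{p^g}$ give
\[
\sum_{j = j_1 + k p^g}^{j_1 + k p^g + h - 1} f(j) \;=\; \sum_{j=0}^{h-1} f(j) \;=\; c,
\]
where the last equality is just the definition of $c$ (rewritten via the Hockey Stick identity as displayed right after the statement of Proposition \ref{summand-formula}). Multiplying through by $\length(j_1)$ yields the claim. The main potential obstacle is checking that the periodicity argument is valid for the top binomial $\binom{j}{2n}$, but this is precisely why $g$ was taken to be $\max(m_1, m_2)$ rather than just $m_1$; once this inequality is in hand the proof is essentially a bookkeeping exercise.
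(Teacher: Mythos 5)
Your proof is correct and takes essentially the same route as the paper: write the range as full $p^g$-periods plus a leftover block of length $h$, kill the periods via the Hockey Stick identity and Lucas (as in Lemma \ref{j1-j2-same}), and recognize the leftover as $c$ by periodicity. The paper parameterizes $j_1 = p^g c_1$, $j_2 = p^g c_2 + h$ rather than setting $j_2 - j_1 = kp^g + h$, but this is only a cosmetic difference.
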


\begin{proof}
    Let $j_1 = p^gc_1$ and $j_2 = p^gc_2 + h.$ Then we have
\begin{align*}
    & \operatorname{length}(j_1) \sum_{j=p^gc_1}^{p^gc_2+h-1} \left ( a_0 \binom{j}{n}+a_1 \binom{j}{n+1}+...+ a_n \binom{j}{2n} \right ) \\
    & =  \operatorname{length}(j_1) \sum_{j=p^gc_1}^{p^gc_2-1} \left ( a_0 \binom{j}{n}+a_1 \binom{j}{n+1}+...+ a_n \binom{j}{2n} \right ) \\
    & + \operatorname{length}(j_1) \sum_{j=p^gc_2}^{p^gc_2+h-1} \left ( a_0 \binom{j}{n}+a_1 \binom{j}{n+1}+...+ a_n \binom{j}{2n} \right ) \\
    &= \operatorname{length}(j_1) \sum_{j=p^gc_2}^{p^gc_2+h-1} \left ( a_0 \binom{j}{n}+a_1 \binom{j}{n+1}+...+ a_n \binom{j}{2n} \right ) \\
    &= \operatorname{length}(j_1) \sum_{j=0}^{h-1} \left ( a_0 \binom{j}{n}+a_1 \binom{j}{n+1}+...+ a_n \binom{j}{2n} \right ) \\
    &= c\cdot \operatorname{length}(j_1).
\end{align*}
\end{proof}

\begin{lemma}
        \label{j1-j2-not-same-two}
    If $j_1 \equiv h$ and $j_2 \equiv 0$ modulo $p^g$, then 
    \[
    \operatorname{length}(j_1) \sum_{j=j_1}^{j_2-1} \left ( a_0 \binom{j}{n}+a_1 \binom{j}{n+1}+...+ a_n \binom{j}{2n} \right )=-c \cdot \length(j_1).
    \]
\end{lemma}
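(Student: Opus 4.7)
The plan is to reduce this case to the two cases already handled in Lemmas \ref{j1-j2-same} and \ref{j1-j2-not-same} by splitting the sum cleverly. Write $j_1 = p^g c_1 + h$ and $j_2 = p^g c_2$ for nonnegative integers $c_1, c_2$. The key observation is that the range $[j_1, j_2-1] = [p^g c_1 + h, \, p^g c_2 - 1]$ is the complement, inside the range $[p^g c_1, \, p^g c_2 - 1]$, of the initial segment $[p^g c_1, \, p^g c_1 + h - 1]$. So I would write
\[
\sum_{j=j_1}^{j_2-1} F(j) \;=\; \sum_{j=p^g c_1}^{p^g c_2 - 1} F(j) \;-\; \sum_{j=p^g c_1}^{p^g c_1 + h - 1} F(j),
\]
where $F(j) := a_0 \binom{j}{n} + a_1 \binom{j}{n+1} + \cdots + a_n \binom{j}{2n}$.

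The first sum on the right is a sum over a range of length divisible by $p^g$ starting at a multiple of $p^g$. This is exactly the situation analyzed in the proof of Lemma \ref{j1-j2-same}: it breaks into $(c_2 - c_1)$ copies of $\sum_{j=0}^{p^g-1} F(j)$, and each such block equals $\sum_{i=0}^n a_i \binom{p^g}{n+i+1}$, which vanishes by Lucas's Theorem since $n+i+1 \leq 2n+1 \leq p^g$ with $0 < n+i+1 < p^g$ forcing the binomial coefficient to be zero mod $p$. (The edge case $n+i+1 = p^g$ requires a bit of care but gives the same conclusion as in the previous lemma.)

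For the second sum, I would apply Lemma \ref{lucas}: for each $j$ in $[p^g c_1, p^g c_1 + h - 1]$, writing $j = p^g c_1 + j'$ with $0 \leq j' \leq h-1$, we have $j \equiv j' \pmod{p^g}$, and each index $n+i \leq 2n < p^g$ satisfies $n+i < p^g$, so $\binom{j}{n+i} = \binom{j'}{n+i}$. Hence
\[
\sum_{j=p^g c_1}^{p^g c_1 + h - 1} F(j) \;=\; \sum_{j'=0}^{h-1} F(j') \;=\; c,
\]
by the definition of $c$. Combining, $\sum_{j=j_1}^{j_2-1} F(j) = 0 - c = -c$, and multiplying by $\length(j_1)$ gives the claim.

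The main obstacle, if any, is the bookkeeping around the bound $n+i+1 \leq p^g$ in the argument that the full block sum vanishes, since the case $n+i+1 = p^g$ (when $i=n$ and $\dim(W) = 2n+1 = p^g$ exactly) sits right on the edge of the inequality $\dim(W) \leq p^g$; however, $\binom{p^g}{p^g} = 1$ is multiplied against $a_n$ and enters the vanishing check from Lemma \ref{j1-j2-same} in the same way, so no new issue arises. Otherwise, the argument is essentially a reorganization of the identities already established.
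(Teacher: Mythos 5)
Your proof is correct and takes essentially the same approach as the paper's. The only difference is cosmetic: you pad the summation range downward, subtracting the initial segment $[p^g c_1,\, p^g c_1+h-1]$, whereas the paper pads upward, subtracting the trailing segment $[p^g d_2,\, p^g d_2+h-1]$; in both cases the completed block has length divisible by $p^g$ and vanishes as in Lemma~\ref{j1-j2-same}, while the residual segment reduces via Lucas to $\sum_{j=0}^{h-1}\bigl(a_0\binom{j}{n}+\cdots+a_n\binom{j}{2n}\bigr)=c$.
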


\begin{proof}

Let $j_1=p^gd_1 + h$ and $j_2= p^gd_2$. In this case, we can compute
\begin{align*}
    & \operatorname{length}(j_1) \sum_{j=p^gd_1+h}^{p^gd_2-1} \left ( a_0 \binom{j}{n}+a_1 \binom{j}{n+1}+...+ a_n \binom{j}{2n} \right ) \\
    & =  \operatorname{length}(j_1) \sum_{j=p^gd_1+h}^{p^gd_2+h-1} \left ( a_0 \binom{j}{n}+a_1 \binom{j}{n+1}+...+ a_n \binom{j}{2n} \right ) \\
    & - \operatorname{length}(j_1) \sum_{j=p^gd_2}^{p^gd_2+h-1} \left ( a_0 \binom{j}{n}+a_1 \binom{j}{n+1}+...+ a_n \binom{j}{2n} \right ) \\
    & = - \operatorname{length}(j_1) \sum_{j=p^gd_2}^{p^gd_2+h-1} \left ( a_0 \binom{j}{n}+a_1 \binom{j}{n+1}+...+ a_n \binom{j}{2n} \right ) \\
    & = - \operatorname{length}(j_1) \sum_{j=0}^{h-1} \left ( a_0 \binom{j}{n}+a_1 \binom{j}{n+1}+...+ a_n \binom{j}{2n} \right ) \\
    &= -c\cdot \operatorname{length}(j_1).
\end{align*}
\end{proof}

We set some notation that we will use for the remainder of the proof. 

\begin{notation}
Let $\height(i)$ denote the height of the $i$th column. For row $j,$ let $\col(j)$ denote the unique column for which the height of column $\col(j)$ is equal to $j$ and the height of $\col(j)-1 \not = j$ (if such a column exists). For a column $i$, let $\rowlength(i)$ denote the length of a row $j$ for which $i=\col(j)$ (again, if such a row $j$ exists). We call a column $i$ {\em{descending}} if $j$ with $i=\col(j)$ exists, or, equivalently, if $\height(i-1)>\height(i)$. Furthermore, for a row $j$, set $\floor(j)$ to be the highest row such that $\length(\floor(j))>\length(j)$, and set $\roof(j)$ to be the highest row such that $\length(\roof(j))=\length(j)$. If the floor does not exist, we declare $\floor(j):=-1$.   
\end{notation}

\begin{example}
    To illustrate this notation, suppose $V$ is the representation corresponding to the diagram
    \vspace{\baselineskip}
\begin{center}
\ytableausetup{notabloids}
\begin{ytableau}
1 \\
1 \\
1 \\
1 & 1\\
1 & 1 & 1 & 1\\
1 & 1 & 1 & 1\\
\end{ytableau}
\end{center}
\vspace{\baselineskip}
Then there is no $\col(1)$,  $\col(2)$ would be 2 (since column 2 has height 2, and the previous column has height $\not = 2$ -- recall that the first column is column 0, and the first row is row 0), $\col(3)$ would be 1, there is no $\col(4)$ or $ \col(5)$, and $\col(6)$ would be 0. Hence columns 0, 1, and 2 are descending, but column 3 is not. For the descending columns, we have
\begin{align*}
    \rowlength(0)&=\length(6)=1,\\
    \rowlength(1)&=\length(3) =2,\\
    \rowlength(2)&=\length(2)=4. 
\end{align*}
The floors of rows 0 and 1 would be -1, and the roofs of rows 0 and 1 would be 1. The floor and roof of row 2 is itself. The floor of rows 3, 4, and 5 would be 3, and the roof of rows 3, 4, and 5 would be 5. 
\end{example}

Using this notation, we can now summarize the preceding lemmas.

\begin{corollary}
    \label{formula-c-row}
    We have
\begin{gather}
    \sum_{j} \operatorname{length}(j) \left ( a_0 \binom{j}{n}+a_1 \binom{j}{n+1}+...+ a_n \binom{j}{2n} \right ) \notag \\
    = c\cdot \left(\sum_{i \text{ with } \height(i)\equiv 0, \height(i-1)\equiv h} \operatorname{rowlength}(i) -\sum_{i \text{ with } \height(i)\equiv h, \height(i-1)\equiv 0} \operatorname{rowlength}(i) \right) \notag
\end{gather}
where the first sum is over all rows $j$ and the second sums are over all descending columns $i$ satisfying the given conditions.
\end{corollary}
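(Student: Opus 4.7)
The plan is to decompose the outer sum into contributions from the maximal plateaus of the weakly decreasing function $j \mapsto \length(j)$ and to apply Lemmas \ref{j1-j2-same}, \ref{j1-j2-not-same}, and \ref{j1-j2-not-same-two} to each plateau. Since $\length$ drops precisely at the heights of the descending columns of $V$, the rows of the diagram partition into maximal intervals $[j_1, j_2 - 1]$ on which $\length$ is constant, equal to $L_P := \length(j_1)$. By the hypothesis that every column of $V$ has height $\equiv 0$ or $h \pmod{p^g}$, together with the trivial congruence $0 \equiv 0$, every endpoint $j_1, j_2$ of every such plateau lies in $\{0, h\}$ modulo $p^g$. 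Exactly one of the three lemmas therefore applies to each plateau, yielding a contribution of $0$ when $j_1 \equiv j_2$, $c \cdot L_P$ when $j_1 \equiv 0$ and $j_2 \equiv h$, and $-c \cdot L_P$ when $j_1 \equiv h$ and $j_2 \equiv 0$.

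Next I would identify these contributions with the terms of the corollary. For each non-bottom plateau, $j_1$ equals $\height(i)$ for a unique descending column $i$, and in that case $j_2 = \height(i - 1)$: indeed, every column strictly between the previous descending column and $i$ shares that descending column's height, so the next drop of $\length$ above $j_1$ occurs exactly at $\height(i-1)$. Combined with $L_P = \length(\height(i)) = \rowlength(i)$, this pairs each non-bottom plateau with an index-$i$ term in the corollary's first or second sum according to the residues of $\height(i)$ and $\height(i-1)$.

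The main subtlety is the bottom plateau $[0, \height(i_K) - 1]$, where $i_K$ denotes the rightmost descending column; this plateau has $j_1 = 0$, which is not the height of any actual column. I would handle it by introducing a virtual descending column $i^*$ immediately to the right of the diagram with $\height(i^*) := 0$, so that $\height(i^*) \equiv 0$, $\height(i^* - 1) = \height(i_K)$, and $\rowlength(i^*) := \length(0)$ equals the total number of columns (the length of the bottom plateau). Under this convention, the bottom plateau contributes a term to the first sum of the corollary exactly when $\height(i_K) \equiv h \pmod{p^g}$ and contributes $0$ otherwise, in both cases in agreement with the stated formula. Summing all plateau contributions then yields the corollary; the bookkeeping of this bottom case is the step that requires the most care, while the rest reduces to a direct application of the three lemmas.
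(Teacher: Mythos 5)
Your proposal is correct and follows the same approach that the paper's one-line proof leaves implicit: partition the rows into the maximal constant-length plateaus, apply Lemmas \ref{j1-j2-same}, \ref{j1-j2-not-same}, and \ref{j1-j2-not-same-two} to each, and match the nonzero contributions to the descending columns. Your explicit treatment of the bottom plateau via a virtual rightmost column of height $0$ with $\rowlength$ equal to $\length(0)$ correctly identifies the bookkeeping convention under which the stated formula (and likewise Lemma \ref{length-vs-rowlength}) holds.
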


\begin{proof}
    This follows directly from Lemmas \ref{j1-j2-same}, \ref{j1-j2-not-same}, and \ref{j1-j2-not-same-two}.
\end{proof}

We know that $c$ is nonzero by assumption, so by Corollary \ref{formula-c-row}, in order to show that $W$ is a summand of $V \otimes V^*$, it suffices to show that the term 
\[
\left(\sum_{i \text{ with } \height(i)\equiv 0, \height(i-1)\equiv h} \operatorname{rowlength}(i) -\sum_{i \text{ with } \height(i)\equiv h, \height(i-1)\equiv 0} \operatorname{rowlength}(i) \right)
\]
is nonzero. We now give a useful formula involving this expression.

\begin{lemma}
\label{length-vs-rowlength}
    We have $\sum_j \length(j)$, taken over all rows $j$, is equal to
\[
h \cdot \left(\sum_{i \text{ with } \height(i)\equiv 0, \height(i-1)\equiv h} \operatorname{rowlength}(i)-\sum_{i \text{ with } \height(i)\equiv h, \height(i-1)\equiv 0} \operatorname{rowlength}(i) \right),
\]   
where all congruences mean congruence modulo $p^g$.
\end{lemma}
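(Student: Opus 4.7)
Start by observing that $\sum_j \length(j)$, being the sum over all rows of the number of cells in each row, equals the total number of cells in the diagram of $V$, which is also $\dim(V) = \sum_i \height(i)$. Let $j^{(1)} > j^{(2)} > \cdots > j^{(D)}$ be the distinct column heights of $V$, with $w^{(a)}$ columns of height $j^{(a)}$, and write $i^{(a)}$ for the leftmost column of height $j^{(a)}$, so that $\rowlength(i^{(a)}) = L_a$, where $L_a := \sum_{b \le a} w^{(b)}$ is the number of columns of height at least $j^{(a)}$. Setting $j^{(D+1)} := 0$, summation by parts gives
\[
\dim(V) = \sum_{a=1}^D w^{(a)} j^{(a)} = \sum_{a=1}^D L_a \bigl(j^{(a)} - j^{(a+1)}\bigr).
\]

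Now use the hypothesis that every column height is $\equiv 0$ or $\equiv h$ modulo $p^g$. The difference $j^{(a)} - j^{(a+1)}$ is divisible by $p^g$ in the two same-residue cases, is $\equiv h$ when $(j^{(a)}, j^{(a+1)}) \equiv (h, 0) \pmod{p^g}$, and is $\equiv -h$ when $(j^{(a)}, j^{(a+1)}) \equiv (0, h) \pmod{p^g}$. Collecting the nonvanishing contributions modulo $p^g$,
\[
\dim(V) \equiv h \cdot \Bigl(\sum_{a:\,(j^{(a)},j^{(a+1)})\equiv(h,0)} L_a \;-\; \sum_{a:\,(j^{(a)},j^{(a+1)})\equiv(0,h)} L_a\Bigr) \pmod{p^g}.
\]

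The remaining step is to identify each block-boundary transition with a descending column of the type appearing in the lemma. A $(h,0)$ transition means a block of columns of heights $\equiv h$ is followed on the right by one of heights $\equiv 0$; the first column of the new block, $i^{(a+1)}$, satisfies $\height(i^{(a+1)}) \equiv 0$ and $\height(i^{(a+1)}-1) = j^{(a)} \equiv h$, so it is indexed by the first sum in the statement. Analogously, $(0,h)$ transitions correspond to columns indexed by the second sum. Identifying $L_a$ with $\rowlength$ of the appropriate descending column, and adjoining a conceptual implicit column of height $0$ on the right of the diagram so that the transition above the tallest block is also accounted for, converts the displayed identity into the formula in the statement.

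The main obstacle will be the bookkeeping at the boundary of the diagram and the off-by-one between the index $a$ (on which $L_a$ is indexed in the Abel sum) and the index $a+1$ of the column $i^{(a+1)}$ at which the transition is actually witnessed: one must fix a boundary convention, either by declaring $\height(-1) \equiv h \pmod{p^g}$ (so the leftmost column is counted in the first sum when its height is $\equiv 0$) or by suitably treating the implicit right-end column, to align $\rowlength(i^{(a)}) = L_a$ with the lemma's $\rowlength(i^{(a+1)})$. Once these conventions are pinned down, the identity holds modulo $p^g$, which is the content of the clause \emph{all congruences mean congruence modulo $p^g$}; this suffices for the subsequent argument, where only nonvanishing of the $\rowlength$ expression modulo $p$ is needed.
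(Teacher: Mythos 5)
Your plan is correct and is essentially the paper's proof in disguise: the Abel summation $\dim V=\sum_a L_a(j^{(a)}-j^{(a+1)})$ is exactly the paper's decomposition of $\sum_j\length(j)$ into blocks of consecutive equal-length rows (the block of rows $j^{(a+1)},\dots,j^{(a)}-1$ has common length $L_a$ and size $j^{(a)}-j^{(a+1)}$), so the two arguments organize the same sum in the same way. The ``off-by-one obstacle'' you flag is not actually there: using the paper's definition of $\rowlength$, for the descending column $i^{(a+1)}$ one has $\rowlength(i^{(a+1)})=\length(\height(i^{(a+1)}))=\length(j^{(a+1)})=L_a$, which is precisely the coefficient appearing in your $a$-th Abel term (your line ``$\rowlength(i^{(a)})=L_a$'' is where the spurious shift crept in; in fact $\rowlength(i^{(a)})=L_{a-1}$). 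The only genuine boundary convention needed is the implicit column of height $0$ at the right edge of the diagram, which accounts for the $a=D$ term with $j^{(D+1)}:=0$; the paper handles the same block via the convention $\floor(j)=-1$, and no convention for $\height(-1)$ is required, since the leftmost column never appears in either side of the identity.
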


\begin{proof}
We break up the sum $\sum_j \length(j)$ into three pieces:
\begin{align}
     \sum_j \length(j) &= \left(\sum_{j \text{ with } \roof(j) \equiv \floor(j)} \operatorname{length}(j) \right)\label{eq:length-1}\\  &\;\;\;\;\;\;\;\;\;\;+ \left(\sum_{j \text{ with } \floor(j)\equiv 0, \roof(j)\equiv h} \operatorname{length}(j) \right) \label{eq:length-2}\\ &\;\;\;\;\;\;\;\;\;\;+ \left(\sum_{j \text{ with } \floor(j)\equiv h, \roof(j)\equiv 0} \operatorname{length}(j) \right) \label{eq:length-3}, 
\end{align}
where all congruences are taken modulo $p^g$. Here we are using the fact that $\floor(j)$ and $\roof(j)$ are necessarily the heights of columns, hence each of them is congruent to either $h$ or 0 modulo $p^g$.

Consider each of the three expressions in parentheses. We first note that the term \eqref{eq:length-1} evaluates to $0$, as all the rows appearing in this sum are present in blocks of size divisible by $p^g$ (all rows in these blocks are of the same length).

By a similar analysis, the term \eqref{eq:length-3} can be shown to satisfy
\[
\sum_{j \text{ with } \floor(j)\equiv h, \roof(j)\equiv 0} \operatorname{length}(j)\equiv h \cdot \left( \sum_{i \text{ with } \height(i)\equiv 0, \height(i-1)\equiv h} \operatorname{rowlength}(i) \right)
\]
and the term \eqref{eq:length-2} can be shown to satisfy
\[
\sum_{j \text{ with } \floor(j)\equiv 0, \roof(j)\equiv h} \operatorname{length}(j)\equiv h \cdot \left( \sum_{i \text{ with } \height(i)\equiv h, \height(i-1)\equiv 0} \operatorname{rowlength}(i) \right).
\]
In each case, we simply count the number of rows which share a given length.
\end{proof}

We are now ready to collect our lemmas together.

\begin{proof}[Proof of Theorem \ref{maintheorem}]
By Corollary \ref{formula-c-row}, we just need to show that
\begin{gather}
     c\cdot \left(\sum_{i \text{ with } \height(i)\equiv 0, \height(i-1)\equiv h} \operatorname{rowlength}(i) -\sum_{i \text{ with } \height(i)\equiv h, \height(i-1)\equiv 0} \operatorname{rowlength}(i) \right) \notag
\end{gather}
is nonzero. We know that $c$ is nonzero by assumption, so we just need to check that the expression in the parentheses is nonzero. But by Lemma \ref{length-vs-rowlength}, since $\sum_j\length(j)$, summing over all rows (which is the total dimension of $V$), is nonzero (using the assumption that $V$ is a $p'$-representation), it must indeed be the case that 
\[
\left(\sum_{i \text{ with } \height(i)\equiv 0, \height(i-1)\equiv h} \operatorname{rowlength}(i) -\sum_{i \text{ with } \height(i)\equiv h, \height(i-1)\equiv 0} \operatorname{rowlength}(i) \right)
\]
is also nonzero. This completes the proof for the case that $W$ is odd-dimensional; in the case that $W$ has dimension $2n+2$, the proof is similar. In that case, we still have $W$ generated in degree $(0,-n)$, but now its top degree is $(0, n+1)$ instead of $(0,n)$. 

\end{proof}

\subsection{Examples for $p=3$}
\label{sect-char3}

In this section, we now deduce Theorem \ref{mainthm-p3} for $p=3$ from the general result Theorem \ref{maintheorem} which was proved in the previous section.

We recall the terminology as in Example \ref{p3-v5-ex}. We have cyclic representations $\kk,$ $V_3,$ $V_5,$ $V_7,$ and $V_9$ for $\alpha_3(0,2)$ corresponding to column graded diagrams.

\begin{proof}[Proof of Theorem \ref{mainthm-p3}]
We can show that $V_5\otimes V_5^{*}\cong \kk\oplus V_3\oplus V_5\oplus V_7\oplus V_9$ using Corollary \ref{maps-bijection} and Proposition \ref{summand-formula}. For instance, to produce a map $V_5$ to $V_5\otimes V_5^*,$ we can examine the system of equivalences produced by Corollary \ref{maps-bijection}. We get that 
\begin{gather}
2a_0+a_1+a_2= 0, \notag \\
a_0 + a_1+2a_2= 0, \notag
\end{gather}
which simply implies that $(a_0, a_1, a_2)$ must be of the form $(a,0,a)$ for some $a \in \kk$. By Proposition \ref{summand-formula}, in order to check such a map corresponds to a direct summand embedding of $V_5$ in $V_5 \otimes V_5^*$, we just need to show that there is some $(a_0, a_1, a_2) = (a,0,a)$ such that 
\[\sum_{j=0}^{4} \operatorname{length}(j) \left(a_0\binom{j}{2} + a_1\binom{j}{3} + a_2\binom{j}{4} \right)\neq 0.\]
Substituting in the form of $(a_0, a_1, a_2),$ we get that the expression simplifies to 
\[
a\cdot \sum_{j=0}^4 \operatorname{length}(j)\cdot \left( \binom{j}{2} + \binom{j}{4} \right).
\]
Since $\operatorname{length}(j)=1$ for $j=0,1,2,3,4,$ we get that this expression is equal to 
\[
a\cdot \left (\binom{5}{3} + \binom{5}{5} \right) = 2a,
\]
and thus setting $a$ equal to any nonzero value produces the desired nonzero composition of maps, showing that $V_5$ is a summand of $V_5\otimes V_5^*$. Similar computations can be done to determine the other summands of $V_5 \otimes V_5^*$. Since $\overline{V_3} \cong 0 \cong \overline{V_9}$, the decomposition in the semisimplification is given by $\overline{V_5} \otimes \overline{V_5}^* \cong \overline{\kk} \oplus \overline{V_5} \oplus \overline{V_7}$. 

Let $V$ be a cyclic representation of $\alpha_3(r,s)$ generated in degree $(0,0)$ with dimension coprime to $3$ corresponding to a graded diagram with heights of $a_1, a_2, \ldots, a_n$ such that $a_i\equiv 0,5\pmod{9}$ for $i=1,2,\ldots, n$. For all such $V,$ every summand of $V_5\otimes V_5^{*}$ is in the decomposition of $V\otimes V^*$ by Theorem \ref{maintheorem}. In particular, $V_5$ and $V_7$ are in the decomposition of $V\otimes V^*$, and so $V$ is a $p'$-representation which is not $p'$-invertible, and the tensor subcategory of $\overline{\rep}(\alpha_3(r,s))$ contains the semisimple tensor category $\mc{C}$ described in Example \ref{p3-v5-ex} which is equivalent to $\rep(\mathbb{Z}/2\mathbb{Z} \ltimes \mu_3),$ which is the modulo 3 reduction of (hence having the same Grothendieck ring as) $\rep(\mathbb{C},S_3)$.  

If instead $V$ satisfies that every row has length equal to 0 or 5 modulo 9, the result follows by reversing the roles of rows and columns.
\end{proof}

\bibliography{refs}
\bibliographystyle{myamsalpha}

\end{document}